\newtheorem{thm}{Theorem}[section]
\newtheorem{prop}[thm]{Proposition}
\newtheorem{cor}[thm]{Corollary}
\theoremstyle{definition}
\newtheorem{definition}[thm]{Definition}
\theoremstyle{remark}
\newtheorem{remark}[thm]{Remark}
\newtheorem*{rmk}{Remark}
\title[Darboux equivalence  for matrix valued orthogonal polynomials]{Darboux equivalence \\ for matrix valued orthogonal polynomials}
\author{Ignacio Bono Parisi}
\address{CIEM-FaMAF, Universidad Nacional de Córdoba, CP 5000, Córdoba, Argentina}
\email{ignacio.bono@unc.edu.ar}
\author{Inés Pacharoni}
\address{CIEM-FaMAF, Universidad Nacional de Córdoba, CP 5000, Córdoba, Argentina}
\email{ines.pacharoni@unc.edu.ar}
\author{Ignacio Zurrián}
\address{Departamento de Matemática Aplicada II, Universidad de Sevilla, Seville 41011}
\email{ignacio.zurrian@fulbrightmail.org}
\subjclass[2023]{33C45, 42C05, 33C47}
\keywords{Matrix-valued orthogonal polynomials, Darboux transformations, bispectrality.}
\begin{document}
\begin{abstract} 
In this work, we give some criteria that allow us to decide when two sequences of matrix-valued orthogonal polynomials are related via a Darboux transformation and to build explicitly such transformation. In particular, they allow us to see when and how any given sequence of polynomials is Darboux related to a diagonal matrix of classic orthogonal polynomials. We also explore the notion of Darboux-irreducibility and study some sequences that are not a Darboux transformation of classical orthogonal polynomials.
\end{abstract} 
\maketitle
\section{Introduction}

The issue of matrix-valued orthogonal polynomials (MVOP) satisfying a second-order differential equation was addressed for the first time in \cite{D97}. Since the first examples were built, from matrix spherical functions \cites{GPT01, GPT02, GPT03a} and from direct methods \cite{DG04},  this has been an area of interest where coexist different branches of mathematics such as representation theory, approximation theory, harmonic analysis, operator theory, special functions, etc.

More recently, in the very important work \cite{CY22}, the authors proved that every sequence of MVOP that satisfies a second-order differential equation and a certain condition must be a Darboux transformation of classical orthogonal polynomials. This condition is natural enough to encompass almost all the known cases. In \cite{BP23-1}, it was proven that an example from \cite{DG07} did not fit this condition, and later in \cite{BP24-1}, infinite families of examples of arbitrary size that do not fit it either are constructed. 

The above-mentioned condition requires the study of the corresponding algebra of operators $\mathcal D(W)$ and to verify whether its module rank coincides with the size of the matrices. 
In this note, motivated by \cite{CY22}, we give necessary and sufficient conditions for a sequence of MVOP to be a Darboux transformation of a direct sum of smaller sequences, together with an explicit construction of such transformation. We also apply these results to different examples, in particular, to illustrate that they can be used both considering only the polynomials involved without a need for the explicit weights (Example 3 in Section \ref{E3}) or considering only the weights with no need of the expressions of polynomials involved (Example 1 in Section \ref{E1}).

In Section \ref{sec-MOP}, we give preliminary notions related to MVOP and the algebra $\mathcal D(W)$. In Section \ref{Darboux-sect}, we recall the notion of Darboux transformation and give an equivalent and simpler condition for a sequence of MVOP to be Darboux transformation of another sequence of MVOP (Theorem \ref{darboux-thm}); we also study the modules $\mathcal D(W,\tilde W)$ (Theorem \ref{op mod}), which later in the paper is related to the notion of Darboux transformation. In Section \ref{S-DTcp}, we give results that allow us to verify whether a sequence of MVOP is Darboux-equivalent to a diagonal matrix of classical orthogonal polynomials and build explicitly such transformation. We apply these results in two examples.
In Section \ref{S-DT}, we formulate the  results for general size analogous to the ones of Section \ref{S-DTcp}. These results allow us to verify whether a sequence of MVOP is Darboux-equivalent to a diagonal matrix of smaller (not necessarily scalar-valued) sequences of MVOP. We also apply these results in two examples.

We believe that these results may be useful as a supplement to those in \cite{CY22} by giving a method of building the explicit Darboux transformations.
Besides, they enable a different approach to MVOP that are not Darboux transformation of classical orthogonal polynomials.

\section
{Matrix valued orthogonal polynomials and the algebra $\mathcal D(W)$}\label{sec-MOP}

 Let $W=W(x)$ be a matrix weight of size $N$ on the real line, that is, a complex $N\times N$ matrix-valued smooth function on the possibly infinite interval $\mathcal{I}=(x_0,x_1)$ such that $W(x)$ is positive definite almost everywhere with finite moments of all orders. 
 Let $\operatorname{Mat}_N(\mathbb{C})$ be the algebra of all $N\times N$ complex matrices and let $\operatorname{Mat}_N(\mathbb{C})[x]$ be the algebra of polynomials in the indeterminate $x$ with coefficients in $\operatorname{Mat}_N(\mathbb{C})$. We consider the following Hermitian sesquilinear form in the linear space $\operatorname{Mat}_N(\mathbb{C})[x]$
\begin{equation*}
  \langle P,Q \rangle =  \langle P,Q \rangle_W = \int_{x_0}^{x_1} P(x) W(x) Q(x)^*\,dx,
\end{equation*}
where $*$ denotes the Hermitian conjugate.

Let us consider matrix differential operators
$  D=\sum_{i=0}^s \partial ^i F_i(x)$,  with $\partial=\frac{d}{dx} $,
acting on the right-hand side on a matrix-valued function $P$, namely $(P\cdot D)(x) = \sum_{j=0}^{n} \frac{d^{j}P}{dx^{j}}(x)F_{j}(x)$.

We denote the algebra of these operators with polynomial coefficients by $$\operatorname{Mat}_{N}(\Omega[x])=\Big\{D = \sum_{j=0}^{n} \partial^{j}F_{j}(x) \, : F_{j} \in \operatorname{Mat}_{N}(\mathbb{C}[x]) \Big \}.$$
\noindent 
More generally, when necessary, we will also consider $\operatorname{Mat}_{N}(\Omega[[x]])$, the set of all differential operators with coefficients in $\mathbb{C}[[x]]$, the ring of power series with coefficients in $\mathbb{C}$.

Given a weight matrix $W$, the algebra
\begin{equation}\label{algDW}
  \mathcal D(W)=\left\{D\in \operatorname{Mat}_{N}(\Omega[x]) :  \,P_n\cdot D=\Lambda_n(D) P_n \text{ for some } \Lambda_n(D)\in \operatorname{Mat}_N(\mathbb{C}), \text{ for all } n\in\mathbb{N}_0\right\}
\end{equation}
is introduced in \cite{GT07}, where $\{P_n\}_{n\in \mathbb{N}_0}$ is any sequence of matrix-valued orthogonal polynomials with respect to $W$ and as usual $\mathbb N_0=\{0,1,\dots  \}$.

The {\em formal adjoint} of an operator $D=\sum_{j=0}^{n} \partial^{j}F_{j}(x)$, denoted  by $D^*$, is the operator given by $D^*=\sum_{j=0}^{n} F_{j}^*(x) (-1)^j\partial^{j}.$
The {\em formal $W$-adjoint} of $ \mathfrak{D}\in \operatorname{Mat}_{N}(\Omega([x]))$, or  the formal adjoint of $\mathfrak D$ with respect to $W(x)$  is the differential operator $\mathfrak{D}^{\dagger} \in \operatorname{Mat}_{N}(\Omega[[x]])$ defined
by
$$\mathfrak{D}^{\dagger}:= W(x)\mathfrak{D}^{\ast}W(x)^{-1},$$
where  $\mathfrak{D}^{\ast}$ is the formal adjoint of 
$\mathfrak D$. 
An operator $\mathfrak{D}\in \operatorname{Mat}_{N}(\Omega[x])$ is called {\em $W$-adjointable} if there exists 
$\tilde  {\mathfrak{D}} \in \operatorname{Mat}_{N}(\Omega[x])$, such that
$$\langle P\cdot \mathfrak{D},Q\rangle=\langle P,Q\cdot \tilde {\mathfrak{D}}\rangle,$$ for all $P,Q\in \operatorname{Mat}_N(\mathbb{C})[x]$. Then we say that the operator $\tilde {\mathfrak D}$ is the $W$-adjoint of $\mathfrak D $.

\ 

We say that a differential operator $D\in \mathcal D(W)$ is $W$-{\em symmetric} if $\langle P\cdot D,Q\rangle=\langle P,Q\cdot D\rangle$, for all $P,Q\in \operatorname{Mat}_N(\mathbb{C})[x]$.
An operator $\mathfrak{D}\in \operatorname{Mat}_{N}(\Omega([x]))$ is called {\em formally } $W${\em -symmetric} if $\mathfrak{D}^{\dagger} = \mathfrak{D}$.

In \cite[Corollary 4.5]{GT07}, it is shown that the set $\mathcal S(W)$ of all $W$-symmetric operators in $\mathcal D(W)$ is a real form of the space $\mathcal D(W)$, i.e.
\begin{equation}\label{symop}
    \mathcal D(W)= \mathcal S (W)\oplus i \mathcal S (W),
\end{equation}
as real vector spaces. 

Notice that, using \eqref{symop} and \cite[Proposition 2.10]{GT07}, $\mathcal D(W)$ may also be regarded as the set of operators $D = \sum_{j=0}^{m} \partial^{j}F_{j}(x)$, with $F_{j}$ a polynomial of degree less than or equal to $j$, that are $W$-adjointable.

\begin{definition} We say that a matrix function $F$ is the direct sum of the matrix functions 
$F_1,F_2$, and write $F=F_1\oplus F_2$, if 
$$F(x)=\left(\begin{matrix}F_1(x)&{\bf 0} \\ {\bf0} & F_2(x) \end{matrix}\right).$$
Similarly, we define $F=F_1\oplus F_2\oplus\cdots\oplus F_j$.  
\end{definition}

Let us recall that for a given sequence of polynomials there may be more than one weight for which they are orthogonal on an infinite interval, i.e. \cite[Example 4.13]{TZ18}.
In this work, to avoid pathological weights and their technicalities, since we are only interested in the sequences of orthogonal polynomials, we will consider weights $W$ ``good enough". Namely, if $D\in \operatorname{Mat}_{N}(\Omega[x])$ and $A\in\operatorname{Mat}_{N}(\mathbb{C}[x])$ satisfy
$$\int_{x_0}^{x_1} P(x) A(x) W(x)  \,dx=\int_{x_0}^{x_1} P(x) (WD)(x)\,dx,\quad \text{ for all } P\in \operatorname{Mat}_{N}([x]),$$
then we have $A(x) W(x)= (WD)(x)$.
 
In particular, for any $W$-adjointable operator in $\operatorname{Mat}_{N}\Omega([x])$ the formal $W$-adjoint coincides with the $W$-adjoint. Moreover, since all operators in $\mathcal{D}(W)$ are $W$-adjointable, we have that $D \in \mathcal{D}(W)$ is $W$-symmetric if and only if it is formally $W$-symmetric.
The interested reader may consult \cite[Section 2.2]{CY22} for a more detailed account of the possible situations. The authors in that work consider an assumption (\cite[Assumption 2.21]{CY22}) that implies the ours. 
 
\section{The Darboux transformation}\label{Darboux-sect}

Throughout this section, $P_n=P_n(x)$ and $\tilde P_n=\tilde P_n(x)$ denote sequences of matrix-valued orthogonal polynomials with respect to some weights $W=W(x)$ and $\tilde W=\tilde W(x)$, respectively.

\begin{definition}
    We say that a linear differential operator $D$ is {\it degree-preserving} operator if, for any polynomial $f$ of degree $n$, the function $f\cdot D$ is again a polynomial of degree $n$, for all but finitely many $n\in \mathbb{N}_{0}$. 
\end{definition}

\begin{definition}\label{darboux-def}
We say that $\widetilde{P}_{n}$ is a {\em Darboux transformation} of ${P}_{n}$ if there exists a differential operator $D \in \mathcal{D}(W)$ that can be factorized as $D = \mathcal{V}\mathcal{N}$ with degree-preserving operators $\mathcal{V}, \, \mathcal{N} \in \operatorname{Mat}_{N}(\Omega[x])$, such that
    \begin{equation*}
            P_{n}(x) \cdot \mathcal{V}  = A_{n}\widetilde{P}_{n}(x),
    \end{equation*}
for a sequence of matrices $A_{n} \in \operatorname{Mat}_{N}(\mathbb{C})$, for all $n\in \mathbb N_0$. 
\end{definition}

We have the following theorem that allows us to simplify the necessary and sufficient conditions for having a Darboux transformation in this particular context of matrix-valued orthogonal polynomials.

\begin{thm}\label{darboux-thm} 
If there exists a differential operator $\mathcal{V}\in \operatorname{Mat}_{N}(\Omega[x])$ such that 
    \begin{equation*}
            P_{n}(x) \cdot \mathcal{V}  = A_{n}\tilde {P}_{n}(x), \quad \text{ for all } n\in \mathbb{N}_{0}, 
    \end{equation*}
with $A_{n} \in \operatorname{Mat}_{N}(\mathbb{C})$ invertible for all but finitely many $n$, then $\tilde {P}$ is a {Darboux transformation} of $P$. The converse is also true. 
\end{thm}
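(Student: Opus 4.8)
The plan is to prove the two implications separately. The converse is the easy half: Definition~\ref{darboux-def} provides, verbatim, an operator $\mathcal V$ with $P_n\cdot\mathcal V=A_n\tilde P_n$, so only the invertibility of $A_n$ for large $n$ needs checking. Since $\mathcal V$ is degree-preserving it has a leading symbol $\Phi(n)$, a matrix polynomial in $n$ describing its action on top-degree coefficients, and degree-preservation forces $\Phi(n)$ to be invertible for all but finitely many $n$; comparing the leading coefficients on both sides of $P_n\cdot\mathcal V=A_n\tilde P_n$ then exhibits $A_n$ as a product of the (invertible) leading coefficient of $P_n$, the symbol $\Phi(n)$, and the inverse of the leading coefficient of $\tilde P_n$, hence invertible for all but finitely many $n$.

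The substance of the theorem is the forward implication. Assume $\mathcal V$ with $P_n\cdot\mathcal V=A_n\tilde P_n$ and $A_n$ invertible for all but finitely many $n$. First I would record that $\mathcal V$ is itself degree-preserving: the $P_k$ form a basis of $\operatorname{Mat}_N(\mathbb C)[x]$ as a left $\operatorname{Mat}_N(\mathbb C)$-module, so any $f$ of degree $n$ is $\sum_{k\le n}c_kP_k$ with $c_n$ invertible, and $f\cdot\mathcal V=\sum_k c_kA_k\tilde P_k$ has leading term $c_nA_n\tilde P_n$ of degree exactly $n$ whenever $A_n$ is invertible. The strategy is then to manufacture a degree-preserving $\mathcal N\in\operatorname{Mat}_N(\Omega[x])$ with $\tilde P_n\cdot\mathcal N=B_nP_n$ and $B_n$ eventually invertible; for then $D:=\mathcal V\mathcal N$ satisfies $P_n\cdot D=A_n\tilde P_n\cdot\mathcal N=A_nB_nP_n$, so $D\in\mathcal D(W)$, and the factorization $D=\mathcal V\mathcal N$ into degree-preserving operators is exactly the data required by Definition~\ref{darboux-def}.

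To build $\mathcal N$ I would take the formal adjoint of $\mathcal V$ intertwining the two weights: integrating by parts in $\langle P\cdot\mathcal V,Q\rangle_{\tilde W}$, with boundary terms vanishing under the standing ``good enough'' hypotheses on the weights, yields an operator $\mathcal N$ of the same order as $\mathcal V$ with $\langle P\cdot\mathcal V,Q\rangle_{\tilde W}=\langle P,Q\cdot\mathcal N\rangle_W$ for all $P,Q$. A priori $\mathcal N\in\operatorname{Mat}_N(\Omega[[x]])$, since its coefficients are assembled from $W$, $\tilde W$, and their derivatives. The heart of the matter, and the step I expect to be the main obstacle, is to show that $\mathcal N$ in fact has polynomial coefficients. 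Testing the adjoint identity with $P=P_n$ and $Q=\tilde P_m$ gives $\langle P_n,\tilde P_m\cdot\mathcal N\rangle_W=\langle A_n\tilde P_n,\tilde P_m\rangle_{\tilde W}=A_n\langle\tilde P_n,\tilde P_n\rangle_{\tilde W}\,\delta_{nm}$, so $\tilde P_m\cdot\mathcal N$ has the same $W$-inner products against every $P_n$ as the polynomial $B_mP_m$, where $B_m$ is an explicit product of $A_m^{*}$ with the two norm matrices $\langle\tilde P_m,\tilde P_m\rangle_{\tilde W}$ and $\langle P_m,P_m\rangle_W^{-1}$. Completeness of $\{P_n\}$ --- equivalently, the ``good enough'' principle that a function all of whose $W$-moments vanish must itself vanish --- then forces $\tilde P_m\cdot\mathcal N=B_mP_m$.

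Finally I would upgrade this to the required polynomial, degree-preserving operator. As $\mathcal N$ is left-$\operatorname{Mat}_N(\mathbb C)$-linear and $\{\tilde P_m\}$ is a module basis, the identity $\tilde P_m\cdot\mathcal N=B_mP_m$ shows $\mathcal N$ maps every matrix polynomial to a matrix polynomial; evaluating on $I,xI,\dots,x^{s}I$, with $s$ the order of $\mathcal N$, then expresses each coefficient of $\mathcal N$ as a polynomial, so $\mathcal N\in\operatorname{Mat}_N(\Omega[x])$. Since $A_m$ is eventually invertible and the norm matrices are invertible, $B_m$ is invertible for all but finitely many $m$, and the leading-coefficient argument above (applied now to $\tilde P_m\cdot\mathcal N=B_mP_m$) shows $\mathcal N$ is degree-preserving. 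Taking $D=\mathcal V\mathcal N$ closes the forward direction. As noted, the delicate point throughout is the transition from the formal adjoint in $\operatorname{Mat}_N(\Omega[[x]])$ to genuinely polynomial coefficients, which is where the completeness/``good enough'' hypotheses on $W$ and $\tilde W$ are indispensable.
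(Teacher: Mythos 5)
Your proof is correct and follows essentially the same route as the paper: the key step in both is to produce $\mathcal N=\tilde W\mathcal V^{*}W^{-1}$ as the adjoint of $\mathcal V$ with respect to the pair of weights, read off its eigenvalues $\tilde A_n=\|\tilde P_n\|_{\tilde W}^{2}A_n^{*}\|P_n\|_{W}^{-2}$ from the inner-product computation, and take $D=\mathcal V\mathcal N$. The only difference is presentational: the paper obtains adjointability and polynomiality of $\mathcal N$ in one stroke by embedding $\mathcal V$ as $\left(\begin{smallmatrix}0&\mathcal V\\0&0\end{smallmatrix}\right)\in\mathcal D(W\oplus\tilde W)$ and invoking the closure of that algebra under $\dagger$, whereas you re-derive these facts by hand via integration by parts, completeness of $\{P_n\}$, and evaluation on $I,xI,\dots,x^{s}I$.
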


This result is a corollary of the following proposition. It is worth observing that we did not require in the hypothesis the functions $P_n(x)$ or $\tilde P_n(x)$ to be bispectral, but it is a consequence.

\begin{prop}	\label{dr}
If there exists a differential operator $\mathcal{V}\in \operatorname{Mat}_{N}(\Omega[x])$ such that 
    \begin{equation}\label{darboux-1}
            P_{n}(x) \cdot \mathcal{V}  = A_{n}\tilde {P}_{n}(x), \quad \text{ for all } n\in \mathbb{N}_{0}, 
    \end{equation}
with $A_{n} \in \operatorname{Mat}_{N}(\mathbb{C})$ invertible for all but finitely many $n$, then 
we have that  there exists a differential operator $\mathcal{N} \in \operatorname{Mat}_{N}(\Omega[x])$ such that 
    \begin{equation}\label{darboux-2}
\tilde             P_{n}(x) \cdot \mathcal{N}  = \tilde A_{n}{P}_{n}(x), \quad \text{ for all } n\in \mathbb{N}_{0}, 
    \end{equation}
with $\tilde A_{n} \in \operatorname{Mat}_{N}(\mathbb{C})$ invertible for all but finitely many $n$. 
In particular, 
$P_n \cdot \mathcal{V}\mathcal{N}=A_n\tilde A_n P_n$ and 
$\tilde P_n \cdot \mathcal{N}\mathcal{V}=\tilde A_n A_n \tilde P_n$, i.e. $\mathcal{V}\mathcal{N}\in\mathcal D(W)$ and $\mathcal{N}\mathcal{V}\in\mathcal D(\tilde W)$.  Furthermore, $$ \mathcal N=\tilde W\mathcal V^*W^{-1}.$$
\end{prop}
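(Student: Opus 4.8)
The plan is to take $\mathcal N:=\tilde W\mathcal V^{*}W^{-1}$ as the candidate operator and to prove the three assertions in turn. This choice is the natural two-weight analogue of the single-weight formula $\mathfrak D^{\dagger}=W\mathfrak D^{*}W^{-1}$, and it is in fact what the statement records. A priori $\mathcal N\in\operatorname{Mat}_{N}(\Omega[[x]])$, since its coefficients involve $W^{-1}$ and derivatives of $\tilde W$; the whole difficulty will be to show that it actually lies in $\operatorname{Mat}_{N}(\Omega[x])$ and is degree-preserving. First I would establish a ``cross-adjoint'' identity: for all $P,Q\in\operatorname{Mat}_{N}(\mathbb C)[x]$,
\[
\langle P\cdot\mathcal V, Q\rangle_{\tilde W}=\langle P, Q\cdot\mathcal N\rangle_{W}.
\]
This follows from the definition of the formal adjoint $\mathcal V^{*}$ together with $\tilde W^{*}=\tilde W$, $W^{*}=W$ and $(W^{-1})^{*}=W^{-1}$, exactly as in the single-weight computation $\langle P\cdot\mathfrak D,Q\rangle_{W}=\langle P,Q\cdot\mathfrak D^{\dagger}\rangle_{W}$; the boundary terms produced by integration by parts vanish thanks to the adjointability built into our ``good enough'' weights.

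Second, I would read off the action of $\mathcal N$ on the $\tilde P_{n}$ by pairing against the basis $\{P_{k}\}$. Writing $H_{n}:=\langle P_{n},P_{n}\rangle_{W}$ and $\tilde H_{n}:=\langle\tilde P_{n},\tilde P_{n}\rangle_{\tilde W}$ (positive definite, hence invertible), the intertwining relation \eqref{darboux-1} and the cross-adjoint identity give, for every $k,n$,
\[
\langle P_{k},\tilde P_{n}\cdot\mathcal N\rangle_{W}=\langle P_{k}\cdot\mathcal V,\tilde P_{n}\rangle_{\tilde W}=\langle A_{k}\tilde P_{k},\tilde P_{n}\rangle_{\tilde W}=A_{n}\tilde H_{n}\,\delta_{kn}.
\]
Thus every Fourier coefficient of $\tilde P_{n}\cdot\mathcal N$ with respect to $\{P_{k}\}$ vanishes except the $n$-th; invoking completeness/determinacy (again guaranteed by the good-weight hypothesis) forces
\[
\tilde P_{n}\cdot\mathcal N=\tilde A_{n}P_{n},\qquad \tilde A_{n}:=\tilde H_{n}A_{n}^{*}H_{n}^{-1}.
\]
Since $A_{n}$ is invertible for all but finitely many $n$ and $H_{n},\tilde H_{n}$ are always invertible, $\tilde A_{n}$ is invertible for all but finitely many $n$, which is \eqref{darboux-2}.

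Third --- and this is the step I expect to be the main obstacle --- I must upgrade these functional identities to $\mathcal N\in\operatorname{Mat}_{N}(\Omega[x])$. The idea is a reconstruction of the coefficients. Let $s=\operatorname{ord}\mathcal V=\operatorname{ord}\mathcal N$ and write $\mathcal N=\sum_{j=0}^{s}\partial^{j}G_{j}$ with a priori smooth coefficients $G_{j}$. Evaluating $\tilde P_{m}\cdot\mathcal N=\tilde A_{m}P_{m}$ for $m=0,\dots,s$ yields the linear system $\sum_{j=0}^{s}\tilde P_{m}^{(j)}G_{j}=\tilde A_{m}P_{m}$. The coefficient array $\big(\tilde P_{m}^{(j)}\big)_{0\le m,j\le s}$ is block lower-triangular (because $\tilde P_{m}^{(j)}=0$ for $j>m$) with invertible constant diagonal blocks $\tilde P_{m}^{(m)}=m!\,\kappa_{m}$, where $\kappa_{m}$ is the invertible leading coefficient of $\tilde P_{m}$; hence it is invertible and its inverse is again block lower-triangular with polynomial entries. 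Solving, each $G_{j}$ becomes a $\mathbb C[x]$-linear combination of the polynomials $\tilde A_{m}P_{m}$, so the $G_{j}$ are polynomials and $\mathcal N\in\operatorname{Mat}_{N}(\Omega[x])$; the degree bounds $\deg G_{j}\le j$ (hence degree-preservation) then follow from $\tilde P_{n}\cdot\mathcal N=\tilde A_{n}P_{n}$ having degree $n$ for the infinitely many $n$ with $\tilde A_{n}$ invertible.

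Finally, the composition statements are bookkeeping: reading the right action left-to-right,
\[
P_{n}\cdot\mathcal V\mathcal N=(P_{n}\cdot\mathcal V)\cdot\mathcal N=A_{n}(\tilde P_{n}\cdot\mathcal N)=A_{n}\tilde A_{n}P_{n},
\]
and symmetrically $\tilde P_{n}\cdot\mathcal N\mathcal V=\tilde A_{n}A_{n}\tilde P_{n}$, so $\mathcal V\mathcal N\in\mathcal D(W)$ and $\mathcal N\mathcal V\in\mathcal D(\tilde W)$, both now with polynomial coefficients, while $\mathcal N=\tilde W\mathcal V^{*}W^{-1}$ has been recorded as the very definition of $\mathcal N$. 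A useful internal check worth noting is that $\mathcal V\mathcal N=(\mathcal V\tilde W\mathcal V^{*})W^{-1}$ is formally $W$-symmetric, since $\mathcal V\tilde W\mathcal V^{*}$ is formally self-adjoint and $(W^{-1})^{*}=W^{-1}$.
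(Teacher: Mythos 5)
Your overall architecture --- define $\mathcal N:=\tilde W\mathcal V^{*}W^{-1}$, prove a cross-adjoint identity, read off $\tilde A_n$ from Fourier coefficients, and recover polynomiality of the coefficients of $\mathcal N$ by solving the block-triangular system $\sum_j \tilde P_m^{(j)}G_j=\tilde A_m P_m$ --- is workable, and the last two steps are essentially sound (the reconstruction argument is a legitimate substitute for the paper's use of the closure of the algebra under $\dagger$, and your formula $\tilde A_n=\tilde H_nA_n^{*}H_n^{-1}$ matches the paper's $\|\tilde P_n\|^2_{\tilde W}A_n^{*}\|P_n\|_W^{-2}$). The genuine gap is in your first step. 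You dismiss the boundary terms produced by integration by parts ``thanks to the adjointability built into our good enough weights,'' but the good-enough hypothesis of Section 2 is a \emph{uniqueness} statement (weak identities tested against all polynomials imply pointwise identities); it neither asserts that a given operator is adjointable nor that boundary terms vanish. The vanishing of those boundary terms is a nontrivial condition --- it is precisely the content of the conditions \eqref{cond_sym} in Theorem \ref{op mod} --- and it fails for general operators and weights. As written, the identity $\langle P\cdot\mathcal V,Q\rangle_{\tilde W}=\langle P,Q\cdot\mathcal N\rangle_{W}$ is asserted rather than proved, and everything downstream depends on it.

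The missing idea, which is the heart of the paper's proof, is to package the hypothesis as the statement that
\[
V=\begin{pmatrix}0&\mathcal V\\0&0\end{pmatrix}\in\mathcal D(W\oplus\tilde W),
\]
since $P_n\oplus\tilde P_n$ is a sequence of orthogonal polynomials for $\hat W:=W\oplus\tilde W$ and \eqref{darboux-1} exhibits it as a family of eigenfunctions of the polynomial-coefficient operator $V$. By \eqref{symop} (the real-form structure of $\mathcal D(\hat W)$ from \cite{GT07}), every element of $\mathcal D(\hat W)$ is $\hat W$-adjointable with adjoint again in $\mathcal D(\hat W)$, and by the good-enough hypothesis this adjoint coincides with the formal adjoint $\hat WV^{*}\hat W^{-1}=\left(\begin{smallmatrix}0&0\\ \mathcal N&0\end{smallmatrix}\right)$. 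This yields in one stroke the adjoint identity you need, the polynomiality of the coefficients of $\mathcal N$ (making your reconstruction step redundant), and the eigenfunction property $\tilde P_n\cdot\mathcal N=\tilde A_nP_n$ directly from membership in $\mathcal D(\hat W)$ --- thereby also avoiding the completeness/density step you invoke to pass from the relations $\langle P_k,\tilde P_n\cdot\mathcal N\rangle_W=A_n\tilde H_n\delta_{kn}$ to the pointwise identity, a step which again is not literally covered by the stated good-enough hypothesis. With that repair, your Fourier computation and the final composition bookkeeping coincide with the paper's argument.
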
	
\begin{proof}
By hypothesis we have 
$$
\begin{pmatrix}
P_n&0\\
0&\tilde P_n
\end{pmatrix}
\cdot \begin{pmatrix}
0&\mathcal V\\
0&0
\end{pmatrix}=
\begin{pmatrix}
0&A_n\\
0&0
\end{pmatrix}
\begin{pmatrix}
P_n&0\\
0&\tilde P_n
\end{pmatrix}.
$$
Since $P_n\oplus \tilde P_n$ are the MVOP with respect to  $\hat W:=W\oplus\tilde W$, we have that $V:=\begin{pmatrix}
0&\mathcal V\\
0&0
\end{pmatrix}\in \mathcal D(\hat W)$ and  $\Lambda_n(V)=\begin{pmatrix}
0&A_n\\
0&0
\end{pmatrix}
$. 
Therefore, $V$ is $\hat W$-adjointable with $V^\dagger\in \mathcal D(\hat W)$. On the other hand, $V^\dagger=\hat  WV^*\hat  W ^{-1} $, then 
$$V^\dagger
=
\begin{pmatrix}
0&0\\
\mathcal N&0
\end{pmatrix}, \quad \text{ with }  \mathcal N=\tilde W\mathcal V^*W^{-1}.
$$
Let us call $\Lambda_n(V^{\dagger})$ the eigenvalues such that
\begin{equation}\label{1}
\begin{pmatrix}
P_n&0\\
0&\tilde P_n
\end{pmatrix} \cdot
V^{\dagger}
=
\begin{pmatrix}
P_n&0\\
0&\tilde P_n
\end{pmatrix} \cdot
\begin{pmatrix}
0&0\\
\mathcal N&0
\end{pmatrix}=
\Lambda_n(V^{\dagger})
\begin{pmatrix}
P_n&0\\
0&\tilde P_n
\end{pmatrix}.
\end{equation}
Then, we have
\begin{align*}
\Lambda_n(V^{\dagger})\left(\begin{smallmatrix}
\|P_n\|^2_W&0\\
0&\|\tilde P_n\|^2_{\tilde W}
\end{smallmatrix}\right)
=&
\left\langle 
\Lambda_n(V^{\dagger})
\left(\begin{smallmatrix}
P_n&0\\
0&\tilde P_n
\end{smallmatrix}\right)
,
\left(\begin{smallmatrix}
P_n&0\\
0&\tilde P_n
\end{smallmatrix}\right)\right\rangle_{\hat W}
&=&
\left\langle
\left(\begin{smallmatrix}
P_n&0\\
0&\tilde P_n
\end{smallmatrix}\right)
\cdot V^{\dagger},
\left(\begin{smallmatrix}
P_n&0\\
0&\tilde P_n
\end{smallmatrix}\right)\right\rangle_{\hat W}
\\
=&
\left\langle
\left(\begin{smallmatrix}
P_n&0\\
0&\tilde P_n
\end{smallmatrix}\right)
,
\left(\begin{smallmatrix}
P_n&0\\
0&\tilde P_n
\end{smallmatrix}\right) \cdot
V
\right\rangle_{\hat W}
&=&
\left\langle
\left(\begin{smallmatrix}
P_n&0\\
0&\tilde P_n
\end{smallmatrix}\right)
,\Lambda_n(V)
\left(\begin{smallmatrix}
P_n&0\\
0&\tilde P_n
\end{smallmatrix}\right)
\right\rangle_{\hat W}
\\
=&
\left(\begin{smallmatrix}
\|P_n\|^2_W&0\\
0&\|\tilde P_n\|^2_{\tilde W}
\end{smallmatrix}\right) \Lambda_n(V)^*
&=&
\left(\begin{smallmatrix}
\|P_n\|^2_W&0\\
0&\|\tilde P_n\|^2_{\tilde W}
\end{smallmatrix}\right)
\left(\begin{matrix}
0&0\\
A_n^*&0
\end{matrix}\right).
\end{align*}
It follows that $$ 
\Lambda_n(V^\dagger)=\begin{pmatrix}
0&0\\
\tilde A_n&0
\end{pmatrix}, 
\quad
\text{ with } \tilde A_n=\|\tilde P_n\|_{\tilde W}^2 A_n^* \| P_n\|_W^{-2}.$$
Hence, from \eqref{1}, we have proved that
$$\tilde P_n \cdot \mathcal N=\tilde A_n P_n,$$
with $\tilde A_n$ invertible for all but finitely many $n$. 
Finally, by combining this with \eqref{darboux-1} we have
$$P_n \cdot\mathcal V\mathcal N=A_n\tilde A_nP_n,\quad \text{ and }\quad
\tilde P_n \cdot \mathcal{N}\mathcal{V}=\tilde A_n A_n \tilde P_n.$$ This completes the proof.
\end{proof}

\begin{rmk}
    Notice that in Theorem \ref{darboux-thm}, it would be equivalent to say that $A_{n}$ is invertible for \emph{some} $n$. 
\end{rmk}

\begin{proof}[Proof of Theorem \ref{darboux-thm}] By applying Proposition \ref{dr} we have that $P_n \cdot \mathcal V \mathcal N=A_n\tilde A_n P_n$, then the operator $D=\mathcal V \mathcal N$ belongs to the algebra $\mathcal D(W)$. On the other hand, the operators $\mathcal V $ and $\mathcal N$ are degree-preserving because of \eqref{darboux-1} and \eqref{darboux-2}, together with the fact that $A_{n}$ and $\tilde A_{n}$ are invertible for all but finitely many $n$. Hence, $\tilde {P}$ is a {Darboux transformation} of $P$. The converse is immediate. 
\end{proof}

As a consequence of Theorem \ref{darboux-thm} and Proposition \ref{dr} we have the following result.
\begin{cor}	
  $\tilde {P}$ is a {Darboux transformation} of $P$ if and only if $ {P}$ is a {Darboux transformation} of $\tilde P$.
\end{cor}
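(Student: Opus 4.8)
The plan is to chain the two preceding results and exploit the symmetry already built into Proposition \ref{dr}. Assume first that $\tilde{P}$ is a Darboux transformation of $P$. I would \emph{not} read off the intertwining operator directly from Definition \ref{darboux-def}, because there the matrices $A_n$ are only asserted to exist and not to be invertible. Instead I would invoke the converse direction of Theorem \ref{darboux-thm}, which upgrades the definition to the statement that there exists $\mathcal{V}\in\operatorname{Mat}_N(\Omega[x])$ with $P_n\cdot\mathcal{V}=A_n\tilde{P}_n$ and $A_n$ invertible for all but finitely many $n$. This is the crucial move: it places us exactly in the hypotheses \eqref{darboux-1} of Proposition \ref{dr}.

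With this in hand, the second step is a direct application of Proposition \ref{dr}. It produces an operator $\mathcal{N}=\tilde{W}\mathcal{V}^*W^{-1}$ together with matrices $\tilde{A}_n$, invertible for all but finitely many $n$, satisfying $\tilde{P}_n\cdot\mathcal{N}=\tilde{A}_n P_n$. In other words, the roles of $P$ and $\tilde{P}$ have now been interchanged in an identity of exactly the same shape as \eqref{darboux-1}, with an intertwiner $\mathcal{N}$ going the other way.

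The third step is to feed this relation back into Theorem \ref{darboux-thm}, now applied with the pairs $(P,W)$ and $(\tilde{P},\tilde{W})$ interchanged: the existence of $\mathcal{N}$ intertwining $\tilde{P}_n$ into $P_n$ with $\tilde{A}_n$ invertible for all but finitely many $n$ yields that $P$ is a Darboux transformation of $\tilde{P}$. This establishes one implication. The reverse implication is obtained verbatim by exchanging the two sequences throughout the argument, so the equivalence is symmetric.

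I do not expect a genuine obstacle here, as the whole statement is a formal consequence of results already proved. The only point requiring care is to route the hypothesis through the converse of Theorem \ref{darboux-thm} rather than through the bare Definition \ref{darboux-def}, so that the invertibility of the $A_n$ that is needed to invoke Proposition \ref{dr} is genuinely available at the outset.
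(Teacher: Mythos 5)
Your argument is exactly the paper's intended one: the corollary is stated as an immediate consequence of Theorem \ref{darboux-thm} and Proposition \ref{dr}, chained precisely as you describe (converse of the theorem to get an intertwiner with invertible $A_n$, Proposition \ref{dr} to reverse the roles, the theorem again to conclude). Your remark about routing through the converse of Theorem \ref{darboux-thm} rather than the bare definition is the right point of care, and the proof is correct.
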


The Darboux transformation defines an equivalence relation between matrix-valued orthogonal polynomials:
\begin{equation}\label{equivrelat}
P_n \sim \tilde {P_n}  \qquad \text{ if and only if }  \qquad \tilde {P_n}  \text{ is a Darboux transformation of $P_n$}. 
\end{equation}

\begin{definition}
We say that a sequence $\{P_n\}_{n\in \mathbb{N}_{0}}$ of $N\times N$ matrix-valued orthogonal polynomials is  \emph{ Darboux-reducible} if $$P_n \sim \left(\begin{matrix}Q_n& 0\\0&R_n\end{matrix}\right),$$
where $\{Q_n\}_{n\in \mathbb{N}_{0}}$ and  $\{R_n\}_{n\in \mathbb{N}_{0}}$ are sequences of  matrix-valued orthogonal polynomials of sizes $a$ and $b$, respectively, with $N=a+b$. We say that such a sequence $\{P_n\}_{n\in \mathbb{N}_{0}}$ is \emph{ Darboux-irreducible} if it is not Darboux-reducible.
\end{definition}

It is worth to observe that for any $D$ such that $P_n \cdot D=\Lambda_n(D)P_n$ and $\mathcal V, \, \mathcal N$ satisfying \eqref{darboux-1} and \eqref{darboux-2}, one has 
$$
\tilde P_n \cdot \mathcal N D \mathcal V =  A_n \Lambda_n(D)\tilde A_n \tilde P_n.
$$
Similarly, if $\tilde D$ is such that $\tilde P_n \cdot \tilde D=\Lambda_n(\tilde D)\tilde P_n$, then
$$
P_n \cdot \mathcal V \tilde  D \mathcal N =  \tilde  A_n \Lambda_n(\tilde D) A_n P_n.
$$
Hence, we have the following result.
\begin{prop} \label{alg debil}
    Let  $\tilde {P_n}$ be  a Darboux transformation of $P_n$. Let   $\mathcal{V}, \, \mathcal{N} $ be the operators in \eqref{darboux-1} and \eqref{darboux-2}. Then
    \begin{enumerate}
    \item[i.] The differential operators $\left(\begin{smallmatrix} 0 && \mathcal{V} \\ 0 && 0 \end{smallmatrix}\right)$ and $  \left(\begin{smallmatrix} 0 && 0 \\ \mathcal{N} && 0 \end{smallmatrix}\right) $ belong to  $\mathcal{D}(W \oplus \tilde {W})$. 
    
        \item [ii.]
    The algebras satisfy
    $$\mathcal{V}\mathcal{D}(\tilde {W})\mathcal{N} \subseteq \mathcal{D}(W) \quad \text{ and } \quad \mathcal{N} \mathcal{D}(W)\mathcal{V} \subseteq \mathcal{D}(\tilde {W}).$$
    \end{enumerate}
\end{prop}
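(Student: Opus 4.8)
The plan is to treat the two items separately, reducing each to an eigenvalue computation for the sequence $P_n\oplus\tilde P_n$, which is orthogonal with respect to $\hat W:=W\oplus\tilde W$. For item (i) I would apply each block operator directly to $P_n\oplus\tilde P_n$. Carrying out the (right) block multiplication, relation \eqref{darboux-1} gives
\begin{equation*}
\begin{pmatrix} P_n & 0\\ 0 & \tilde P_n\end{pmatrix}\cdot\begin{pmatrix} 0 & \mathcal V\\ 0 & 0\end{pmatrix}=\begin{pmatrix} 0 & P_n\cdot\mathcal V\\ 0 & 0\end{pmatrix}=\begin{pmatrix} 0 & A_n\\ 0 & 0\end{pmatrix}\begin{pmatrix} P_n & 0\\ 0 & \tilde P_n\end{pmatrix},
\end{equation*}
and, symmetrically, relation \eqref{darboux-2} gives
\begin{equation*}
\begin{pmatrix} P_n & 0\\ 0 & \tilde P_n\end{pmatrix}\cdot\begin{pmatrix} 0 & 0\\ \mathcal N & 0\end{pmatrix}=\begin{pmatrix} 0 & 0\\ \tilde A_n & 0\end{pmatrix}\begin{pmatrix} P_n & 0\\ 0 & \tilde P_n\end{pmatrix}.
\end{equation*}
Both block operators have polynomial coefficients — the first because $\mathcal V\in\operatorname{Mat}_N(\Omega[x])$ by hypothesis, the second because Proposition \ref{dr} produces $\mathcal N\in\operatorname{Mat}_N(\Omega[x])$ — so these identities exhibit them as elements of $\mathcal D(\hat W)$ with eigenvalues $\left(\begin{smallmatrix}0 & A_n\\0 & 0\end{smallmatrix}\right)$ and $\left(\begin{smallmatrix}0 & 0\\\tilde A_n & 0\end{smallmatrix}\right)$; this is in fact exactly what was recorded in the proof of Proposition \ref{dr}.

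For item (ii) I would invoke the two identities displayed immediately before the statement. Taking $\tilde D\in\mathcal D(\tilde W)$, the successive use of \eqref{darboux-1}, the eigenvalue equation $\tilde P_n\cdot\tilde D=\Lambda_n(\tilde D)\tilde P_n$, and \eqref{darboux-2} expresses $P_n\cdot\mathcal V\tilde D\mathcal N$ as a constant left-multiple of $P_n$. Since $\mathcal V$, $\tilde D$ and $\mathcal N$ all have polynomial coefficients, so does the composite $\mathcal V\tilde D\mathcal N$, whence $\mathcal V\tilde D\mathcal N\in\operatorname{Mat}_N(\Omega[x])$, and the eigenvalue identity then places it in $\mathcal D(W)$. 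This yields $\mathcal V\mathcal D(\tilde W)\mathcal N\subseteq\mathcal D(W)$. The inclusion $\mathcal N\mathcal D(W)\mathcal V\subseteq\mathcal D(\tilde W)$ is obtained in the same way, starting from $D\in\mathcal D(W)$ and computing $\tilde P_n\cdot\mathcal N D\mathcal V$, or simply by appealing to the $W\leftrightarrow\tilde W$ symmetry granted by Proposition \ref{dr}.

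The argument is essentially formal, so I do not expect a serious obstacle. The only points that genuinely require care are fixing the convention for composing right-acting operators, so that the constant matrices appear in the intended order in the eigenvalues, and verifying that each composite is a \emph{polynomial}-coefficient operator; this last check is precisely what guarantees membership in $\operatorname{Mat}_N(\Omega[x])$, and hence in the algebras $\mathcal D(W)$ and $\mathcal D(\tilde W)$, rather than only in $\operatorname{Mat}_N(\Omega[[x]])$.
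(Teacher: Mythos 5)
Your proposal is correct and follows essentially the same route as the paper: item (i) is exactly the block eigenvalue computation already recorded in the proof of Proposition \ref{dr}, and item (ii) is the chain $P_n\cdot\mathcal V\tilde D\mathcal N=A_n\Lambda_n(\tilde D)\tilde A_n P_n$ (and its mirror) that the paper displays immediately before the statement. Your added remarks on the composition convention for right-acting operators and on checking that the composites have polynomial coefficients are sensible and consistent with the paper's definitions.
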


\

Notice that in Definition \ref{darboux-def} there are no mentions of the sizes of $P_n$ and $\tilde P_n$, but \eqref{darboux-1} implies that they need to have the same size to be Darboux-related. In that regard, for the study of relationships between matrix-valued orthogonal polynomials of different sizes, and the results in the next sections, it is useful to recover the following notion from \cite{TZ18}.

\begin{definition} \label{modulos Ww}
Let the sizes of $P_n$ and $\tilde P_n$ be $N_{1} $ and $N_{2}$, respectively.
 We consider 
    \begin{equation*}
            \mathcal{D}(W,\tilde  W)  = \left\{ \mathcal T \in \operatorname{Mat}_{N_{1} \times N_{2}}(\Omega[x]) \, : \, P_{n}(x) \cdot \mathcal T = A_n \tilde  P_{n}(x), \text{ with } A_n \in \operatorname{Mat}_{N_{1}\times N_{2}}(\mathbb{C}) \right \}.
    \end{equation*}
\end{definition}

Observe that $\mathcal D(W, \tilde  W)$
is a left $\mathcal D(W) $-module and a right $\mathcal D(\tilde  W)$-module.  Besides 
\begin{equation} \label{D(W,w)D(w,W)}
     D(W,\tilde  W) \mathcal D(\tilde  W,W) \subset \mathcal D(W) \qquad \text{ and } \qquad \mathcal D(W, W) = \mathcal D(W).
\end{equation}
For the cases when both $W$ and $\tilde W$ are scalar-valued, these modules were studied in \cite{BP23-2}.

We have the following straightforward proposition. 
\begin{prop}\label{basicprop}
    Let $W$ and $\tilde W$ be weight matrices of size $N_1$ and $N_2$, respectively. 
    \begin{enumerate}
        \item [i.] If $ \mathcal T\in \mathcal D(  W,\tilde W) $ then $T=\left(\begin{smallmatrix} 0 && \mathcal T \\ 0 && 0 \end{smallmatrix}\right)\in \mathcal D(W\oplus \tilde W)$.
    
   \item [ii.] If $ \mathcal S\in \mathcal D( \tilde W, W) $ then $S=\left(\begin{smallmatrix} 0 & 0 \\ \mathcal S &0\end{smallmatrix}\right)\in \mathcal D(W\oplus \tilde W)$.
   
   \item [iii.] If $\mathcal V\in \mathcal D(W, \tilde W)  $ then $\mathcal V^\dagger:= \tilde W(x) \mathcal V^* W^{-1}(x)\in \mathcal D(\tilde W, W).$
    \end{enumerate}
\end{prop}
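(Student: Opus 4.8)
The plan is to treat the three items as essentially immediate consequences of the definitions together with the direct-sum bookkeeping already introduced. For item (i), I would start from the hypothesis $\mathcal T\in\mathcal D(W,\tilde W)$, which by Definition \ref{modulos Ww} means $P_n(x)\cdot\mathcal T=A_n\tilde P_n(x)$ for some $A_n\in\operatorname{Mat}_{N_1\times N_2}(\mathbb C)$. The key computation is the block identity
\begin{equation*}
\begin{pmatrix}P_n&0\\0&\tilde P_n\end{pmatrix}\cdot\begin{pmatrix}0&\mathcal T\\0&0\end{pmatrix}=\begin{pmatrix}0&A_n\\0&0\end{pmatrix}\begin{pmatrix}P_n&0\\0&\tilde P_n\end{pmatrix},
\end{equation*}
exactly as in the proof of Proposition \ref{dr}. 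Since $P_n\oplus\tilde P_n$ is the sequence of MVOP for $W\oplus\tilde W$, this says precisely that $T$ acts on these polynomials with matrix eigenvalue $\left(\begin{smallmatrix}0&A_n\\0&0\end{smallmatrix}\right)$, hence $T\in\mathcal D(W\oplus\tilde W)$. Item (ii) is entirely symmetric: I would place $\mathcal S\in\mathcal D(\tilde W,W)$ in the lower-left block and verify the analogous block identity with eigenvalue $\left(\begin{smallmatrix}0&0\\B_n&0\end{smallmatrix}\right)$, where $\tilde P_n\cdot\mathcal S=B_n P_n$.

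For item (iii), the strategy is to reuse the $\dagger$-machinery that was already exercised in the proof of Proposition \ref{dr}. Given $\mathcal V\in\mathcal D(W,\tilde W)$, item (i) gives $V:=\left(\begin{smallmatrix}0&\mathcal V\\0&0\end{smallmatrix}\right)\in\mathcal D(W\oplus\tilde W)$. Because every operator in $\mathcal D(\hat W)$ is $\hat W$-adjointable with $\hat W$-adjoint again in $\mathcal D(\hat W)$ (this is the fact invoked in Proposition \ref{dr}, with $\hat W=W\oplus\tilde W$), the formal $\hat W$-adjoint $V^\dagger=\hat W V^*\hat W^{-1}$ lies in $\mathcal D(\hat W)$. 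Computing this adjoint blockwise using $\hat W=W\oplus\tilde W$ gives
\begin{equation*}
V^\dagger=\begin{pmatrix}W&0\\0&\tilde W\end{pmatrix}\begin{pmatrix}0&0\\\mathcal V^*&0\end{pmatrix}\begin{pmatrix}W^{-1}&0\\0&\tilde W^{-1}\end{pmatrix}=\begin{pmatrix}0&0\\\tilde W\mathcal V^*W^{-1}&0\end{pmatrix},
\end{equation*}
so the lower-left block is exactly $\mathcal V^\dagger=\tilde W\mathcal V^*W^{-1}$. Then I would apply the converse bookkeeping of item (ii): since $V^\dagger\in\mathcal D(W\oplus\tilde W)$ has this block form, the entry $\mathcal V^\dagger$ must satisfy $\tilde P_n\cdot\mathcal V^\dagger=\tilde A_n P_n$ for the appropriate eigenvalue block, which is precisely the statement that $\mathcal V^\dagger\in\mathcal D(\tilde W,W)$.

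I do not anticipate a genuine obstacle here, as all three parts follow the template already established in Proposition \ref{dr}; the only point requiring a little care is item (iii), where one must confirm that the off-diagonal block of the $\hat W$-adjoint carries an eigenvalue relation of the right shape so that it lands in $\mathcal D(\tilde W,W)$ rather than merely in $\mathcal D(\hat W)$. This is guaranteed by the same eigenvalue computation as in Proposition \ref{dr}: the $\hat W$-symmetry pairing forces $\Lambda_n(V^\dagger)$ to be lower-triangular with the surviving block equal to $\|\tilde P_n\|^2_{\tilde W}A_n^*\|P_n\|^{-2}_W$, which is the matrix $\tilde A_n$ witnessing $\mathcal V^\dagger\in\mathcal D(\tilde W,W)$. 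One should also note that $\mathcal V^\dagger$ a priori lies in $\operatorname{Mat}_{N_2\times N_1}(\Omega[[x]])$, but $W$-adjointability within $\mathcal D(\hat W)$ ensures polynomial coefficients, so $\mathcal V^\dagger\in\operatorname{Mat}_{N_2\times N_1}(\Omega[x])$ as required by the definition of $\mathcal D(\tilde W,W)$.
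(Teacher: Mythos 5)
Your proposal is correct and follows essentially the same route as the paper: items (i) and (ii) via the block-diagonal embedding into $\mathcal D(W\oplus\tilde W)$ exactly as in Proposition \ref{dr}, and item (iii) via $\hat W$-adjointability of $V$ and the blockwise computation of $V^\dagger=\hat W V^*\hat W^{-1}$. The extra remarks you add (the eigenvalue shape of $\Lambda_n(V^\dagger)$ and the polynomiality of the coefficients of $\mathcal V^\dagger$) are consistent with, and slightly more explicit than, the paper's brief argument.
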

\begin{proof}
 We proceed in the same way as in Proposition \ref{dr}.  
If $\mathcal V\in \mathcal D(W, \tilde W)$ then $V=\left(\begin{smallmatrix} 0 && \mathcal{V} \\ 0 && 0 \end{smallmatrix}\right)\in \mathcal D(W\oplus \tilde W)$, hence it is $(W\oplus \tilde W)$- adjointable and its adjoint is 
$V^\dagger=\left(\begin{smallmatrix} 0 && 0 \\ \tilde W\mathcal{V}^* W^{-1} && 0 \end{smallmatrix}\right)\in \mathcal D(W\oplus \tilde W)$. Thus we have 
 $\mathcal V^\dagger= \tilde W \mathcal V^* W^{-1} \in \mathcal D(\tilde W, W).$
\end{proof}

\smallskip

If we consider a weight matrix $W$ of size $N$ of the form 
$$W = W_1\oplus  W_2 \oplus \cdots \oplus W_r,$$ where 
 $W_1$, $W_2, \dots , W_r$ are weight matrices of size $N_1$, $N_2, \dots N_r$, respectively. 
A sequence  of orthogonal polynomials with respect to $W$ is given by 
$$P_n(x)=
P_{1,n}(x)\oplus  P_{2,n}(x) \oplus \cdots \oplus P_{r,n}(x)
, $$
where  $\{P_{j,n}\}_{n\in \mathbb{N}_{0}}$  
is a  sequence of orthogonal polynomials with respect  to $W_j$, for $j=1,\dots,r$. 
A differential operator $D \in  \mathcal D(W)$  can be written as a block-matrix
$$ D=\begin{pmatrix}
    D_{1,1} & D_{1,2} & \cdots & D_{1,r}\\ D_{2,1} & D_{2,2}
     && D_{2,r} \\  && \ddots  &&
\end{pmatrix}$$
where $D_{i,j}$ is a %right-hand side 
differential operator of size $N_i \times  N_j$, and
$D_{i,j} \in  \mathcal D(W_i, W_j)$. In particular, we have that  
 $$ \mathcal D(W_1\oplus  W_2 \oplus \cdots \oplus W_r)=\begin{pmatrix}
     \mathcal D(W_1) & \mathcal D(W_1,W_2) & \cdots  & \mathcal D(W_1, W_r) \\  \mathcal D(W_2,W_1) & \mathcal D(W_2) & \cdots & \mathcal D(W_2, W_r) \\ & \ddots & & \\ && \ddots &  \\ 
      \mathcal D(W_r, W_1) & \mathcal D(W_r,W_2) & \cdots  & \mathcal D(W_r)
 \end{pmatrix}.$$

\bigskip

The following theorem will be instrumental for a better understanding of the applications of our results to the examples below. The statement may remind the reader of the symmetry equations from \cite[p.~357]{GPT03a} or \cite{DG04}.

\begin{thm} \label{op mod}
    Let $W$ and $\tilde{W}$ be weight matrices of size $N_{1}$ and $N_{2}$ respectively, supported on the same interval $(x_{0},x_{1})$. Let $\mathcal{V} = \partial^{2}F_{2}(x) + \partial F_{1}(x) + F_{0}(x)$ be a differential operator with $F_{i} \in \mathbb{C}^{N_{1}\times N_{2}}[x]$  and $\deg(F_{i})\leq i$. The differential operator $\mathcal{V}$ belongs to the module $\mathcal{D}(W,\tilde{W})$ if and only if there exist matrix-valued polynomials $G_{i} \in \mathbb{C}^{N_{2} \times N_{1}}[x]$, $i=0,1,2$, with $\deg(G_{i})\leq i$, such that 
    \begin{equation}\label{eq_sym}
        \begin{split}
        G_{2}(x)W(x) & = \tilde{W}(x)F_{2}(x)^{\ast}, \\
        G_{1}(x)W(x) & = 2(\tilde{W}(x)F_{2}(x)^{\ast})'-\tilde{W}(x)F_{1}(x)^{\ast}, \\
        G_{0}(x)W(x) & = (\tilde{W}(x)F_{2}(x)^{\ast})''-(\tilde{W}(x)F_{1}(x)^{\ast})'+\tilde{W}(x)F_{0}(x)^{\ast},
        \end{split}
    \end{equation}
    with the boundary conditions
    \begin{equation}\label{cond_sym}\lim_{x\to x_{0},x_{1}}F_{2}(x)\tilde{W}(x) = 0, \quad \lim_{x\to x_{0}, x_{1}} (F_{1}(x)\tilde{W}(x)-W(x)G_{1}(x)^{\ast})=0.\end{equation}
    Moreover, the differential operator $\mathcal{N} = \partial^{2} G_{2}(x) + \partial G_{1}(x) + G_{0}(x)$ belongs to the module $\mathcal{D}(\tilde{W},W)$.
\end{thm}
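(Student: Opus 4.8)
The plan is to reduce the entire statement to a single integration-by-parts identity and then read off both \eqref{eq_sym} and \eqref{cond_sym} from it. Writing $\mathcal N=\partial^2 G_2+\partial G_1+G_0$, the pivot is the claim that, for all $P,Q\in\operatorname{Mat}_N(\mathbb C)[x]$, one has $\langle P\cdot\mathcal V,Q\rangle_{\tilde W}=\langle P,Q\cdot\mathcal N\rangle_{W}$, and I would show this identity is equivalent, term by term, to the symmetry equations together with the vanishing of a boundary bracket. Starting from $\langle P\cdot\mathcal V,Q\rangle_{\tilde W}=\int (P''F_2+P'F_1+PF_0)\tilde W Q^*\,dx$ and integrating by parts twice to move every derivative off $P$, one collects the boundary contribution $[\,P'F_2\tilde W Q^*-P(F_2\tilde W Q^*)'+PF_1\tilde W Q^*\,]_{x_0}^{x_1}$ and the bulk integral $\int P\big[(F_2\tilde W Q^*)''-(F_1\tilde W Q^*)'+F_0\tilde W Q^*\big]\,dx$. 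Expanding the bulk integrand by Leibniz and comparing it, coefficient by coefficient in the arbitrary quantities $Q^*,(Q^*)',(Q^*)''$, against $\int P\,W\big(G_2^*(Q^*)''+G_1^*(Q^*)'+G_0^*Q^*\big)\,dx=\langle P,Q\cdot\mathcal N\rangle_W$, produces exactly $F_2\tilde W=WG_2^*$, $2(F_2\tilde W)'-F_1\tilde W=WG_1^*$ and $(F_2\tilde W)''-(F_1\tilde W)'+F_0\tilde W=WG_0^*$, which are \eqref{eq_sym} after taking adjoints and using $W^*=W$, $\tilde W^*=\tilde W$. Rewriting the boundary bracket with these relations, it vanishes for all $P,Q$ precisely when $F_2\tilde W\to0$ and $F_1\tilde W-WG_1^*\to0$ at $x_0,x_1$, i.e.\ \eqref{cond_sym}.

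For the forward implication I would assume $\mathcal V\in\mathcal D(W,\tilde W)$ and invoke Proposition \ref{basicprop}: the block operator $V=\left(\begin{smallmatrix}0&\mathcal V\\0&0\end{smallmatrix}\right)$ lies in $\mathcal D(\hat W)$ with $\hat W=W\oplus\tilde W$, so its formal $\hat W$-adjoint $V^\dagger=\left(\begin{smallmatrix}0&0\\\mathcal N&0\end{smallmatrix}\right)$, with $\mathcal N=\tilde W\mathcal V^*W^{-1}$, again lies in $\mathcal D(\hat W)$. By the degree characterization of $\mathcal D$ recalled after \eqref{symop}, the coefficients of $V^\dagger$, hence the $G_i$, are polynomials with $\deg G_i\le i$, and expanding the operator product $\tilde W\mathcal V^*W^{-1}$ in standard form is exactly the coefficient comparison of the first paragraph, yielding \eqref{eq_sym}. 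Moreover, membership in $\mathcal D(\hat W)$ makes $V$ genuinely $\hat W$-adjointable, not merely formally so, by the ``good enough weight'' convention; thus $\langle P\cdot\mathcal V,Q\rangle_{\tilde W}=\langle P,Q\cdot\mathcal N\rangle_W$ holds as an honest integral identity, and comparing it with the integration-by-parts computation forces the boundary bracket to vanish for all $P,Q$. An arbitrariness argument, choosing $P,Q$ so as to prescribe values and first derivatives at each endpoint, then isolates the two conditions in \eqref{cond_sym}.

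For the converse I would begin with polynomials $G_i$ of degree $\le i$ satisfying \eqref{eq_sym} and \eqref{cond_sym}, set $\mathcal N=\partial^2 G_2+\partial G_1+G_0$, and run the integration by parts in reverse: \eqref{eq_sym} makes the bulk integrands coincide and \eqref{cond_sym} annihilates the boundary bracket, so $\langle P\cdot\mathcal V,Q\rangle_{\tilde W}=\langle P,Q\cdot\mathcal N\rangle_W$ for all $P,Q$. To conclude $\mathcal V\in\mathcal D(W,\tilde W)$ I would test against the orthogonal basis: since $\deg(F_i)\le i$, the polynomial $P_n\cdot\mathcal V$ has degree $\le n$, so $P_n\cdot\mathcal V=\sum_{m\le n}A_{n,m}\tilde P_m$; then $A_{n,m}\|\tilde P_m\|_{\tilde W}^2=\langle P_n\cdot\mathcal V,\tilde P_m\rangle_{\tilde W}=\langle P_n,\tilde P_m\cdot\mathcal N\rangle_W$, and because $\deg(\tilde P_m\cdot\mathcal N)\le m<n$ this vanishes by orthogonality of $P_n$, leaving $P_n\cdot\mathcal V=A_{n,n}\tilde P_n$. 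The same identity read from the other side, or a second application of Proposition \ref{basicprop}(iii), gives $\mathcal N\in\mathcal D(\tilde W,W)$, settling the final assertion.

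The routine part is the Leibniz bookkeeping leading to \eqref{eq_sym}; the main obstacle is the boundary analysis behind \eqref{cond_sym}. On a possibly infinite interval the vanishing of the boundary bracket and the legitimacy of the integrations by parts rely on the ``good enough weight'' hypothesis, and extracting two independent endpoint conditions from a single scalar boundary identity requires the arbitrariness argument; I expect this to be where the care is concentrated.
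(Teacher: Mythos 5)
Your proposal is correct and follows essentially the same route as the paper: the forward direction uses Proposition \ref{basicprop} to produce $\mathcal N=\tilde W\mathcal V^*W^{-1}$ with degree-bounded polynomial coefficients, and the equivalence of the mixed adjointness identity $\langle P\cdot\mathcal V,Q\rangle_{\tilde W}=\langle P,Q\cdot\mathcal N\rangle_W$ with \eqref{eq_sym} and \eqref{cond_sym} is exactly the paper's ``careful integration by parts'' for the $\hat W$-symmetry of the block operator $\left(\begin{smallmatrix}0&\mathcal V\\ \mathcal N&0\end{smallmatrix}\right)$, which you have simply unblocked. The only (harmless) deviation is in the converse, where the paper cites the characterization of $\mathcal D(\hat W)$ as symmetric operators with $\deg D_i\le i$ and reads off the block structure of $\Lambda_n$, while you re-derive that fact directly via the expansion $P_n\cdot\mathcal V=\sum_{m\le n}A_{n,m}\tilde P_m$ and orthogonality.
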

\begin{proof}
%Let us observe that if 
If $\mathcal V$  belongs to $\mathcal D(W, \tilde W)$ then, by Proposition \ref{basicprop}, 
\begin{equation*}%\label{eq_d}
V:=\begin{pmatrix}
0&\mathcal V\\
0&0
\end{pmatrix}\in \mathcal D(\hat W), \quad \text{ for }  \quad \hat W=W\oplus\tilde W .
\end{equation*}

Thus $V^\dagger $ belongs to $\mathcal D(\hat W)$ and 
$V^\dag=
\begin{pmatrix}
0&0\\
\mathcal N&0
\end{pmatrix}
$ 
for some order-two linear differential operator $\mathcal N= \partial^{2} G_{2}(x) + \partial G_{1}(x) + G_{0}(x),$  with 
$G_{i}$  a polynomial of order less than or equal to $i$, for $i=0,1,2$.
Now we consider the differential operator 
$$ D:=V+V^\dag = \begin{pmatrix}
0&\mathcal V\\
\mathcal N&0
\end{pmatrix} ,
$$
symmetric with respect to $\hat W$.  
Then, it is a simple matter of careful integration by parts to see that the condition of symmetry for $D$ implies to the three differential equations \eqref{eq_sym} and the boundary conditions \eqref{cond_sym}.

Conversely, let us now assume that there are polynomials $G_0,G_1,G_2,$ with $\deg(G_{i})\leq i$, satisfying \eqref{eq_sym} and  \eqref{cond_sym}. We can define the differential operator 
$$\mathcal{N} := \partial^{2} G_{2}(x) + \partial G_{1}(x) + G_{0}(x),$$
and then the operator $$D:= \begin{pmatrix}
0&\mathcal V\\
\mathcal N&0
\end{pmatrix}=\partial^{2} D_{2}(x) + \partial D_{1}(x) + D_{0}(x)$$
is symmetric with respect to $\hat W$ due to \eqref{eq_sym} and  \eqref{cond_sym}.
Even more, since $D $ is symmetric and $\deg(D_{i})\leq i$ we have that it belongs to the algebra $\mathcal D(W)$. This means that 
$$
\begin{pmatrix}
P_n&0\\
0&\tilde P_n
\end{pmatrix} \cdot
\begin{pmatrix}
0&\mathcal V\\
\mathcal N&0
\end{pmatrix}=
\Lambda_n(D)
\begin{pmatrix}
P_n&0\\
0&\tilde P_n
\end{pmatrix}, \quad \text{ for all } n\in \mathbb{N}_{0},
$$
for some sequence of eigenvalues $\Lambda_n(D).$ It follows immediately that $\Lambda_n(D)$ has to be of the form $\begin{pmatrix}
0&A_n\\
B_n&0
\end{pmatrix}$. In particular, one has that 
$$P_n \cdot \mathcal V=A_n \tilde P_n, \quad \text{for all } n\in \mathbb{N}_{0},$$
proving that $\mathcal V\in\mathcal D (W,\tilde W)$ and
finishing the demonstration.
\end{proof}

\section{Darboux transformation of classical polynomials} \label{S-DTcp}

In this section, we give a criterion to decide whether a sequence of matrix-valued orthogonal polynomials is Darboux reducible to scalar orthogonal polynomials. We apply it to different examples. This criterion can be seen from two different but equivalent angles, below they are stated as Theorems \ref{T1} and \ref{T2}.
They both are a consequence of Proposition \ref{dr}.

\begin{thm}\label{T1} Let  $\{P_n\}_{n\ge0}$ be a  sequence of matrix orthogonal polynomials of size $N$ 
and let $\{p_{j,n}\}_{n\ge0}$ be a scalar-valued sequence of orthogonal polynomials 
 for $j=1,\dots,N$.

 $P_n$  is a Darboux transformation of $p_{1,n} \oplus \cdots \oplus p_{N,n}$ if and only if for $j = 1 , \ldots, N$ there exists a $N\times 1$ vector-valued linear differential operator $D_{j}$ such that 
\begin{equation}\label{e1}
P_n \cdot D_j=\Lambda_{n}(D_{j})\,\,p_{j,n}, 
\end{equation}
and the $N\times 1$ vectors $\{\Lambda_{n}(D_{1}),\dots,\Lambda_{n}(D_{N})\}$ are  linearly independent for all but finitely many $n$.
\end{thm}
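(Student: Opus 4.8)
The plan is to reduce the statement to Theorem \ref{darboux-thm} together with the symmetry of the Darboux relation, and then to translate the resulting single matrix intertwining identity into the $N$ column identities \eqref{e1}. Writing $\tilde W = w_1 \oplus \cdots \oplus w_N$ for the diagonal weight whose scalar blocks are the weights of the sequences $\{p_{j,n}\}$, the diagonal matrix $\tilde P_n = p_{1,n}\oplus\cdots\oplus p_{N,n}$ is a sequence of MVOP with respect to $\tilde W$. The first step is to fix the convenient orientation of the relation: by the Corollary following Theorem \ref{darboux-thm}, the assertion ``$P_n$ is a Darboux transformation of $\tilde P_n$'' is equivalent to ``$\tilde P_n$ is a Darboux transformation of $P_n$,'' and by Theorem \ref{darboux-thm} this latter statement holds if and only if there exists an operator $\mathcal W \in \operatorname{Mat}_{N}(\Omega[x])$ with $P_n \cdot \mathcal W = B_n \tilde P_n$ for all $n$ and $B_n$ invertible for all but finitely many $n$. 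This is the orientation I want, because here the diagonal sequence $\tilde P_n$ appears on the target side.

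The core of the argument is then a purely column-wise decomposition. I would write $\mathcal W$ as the matrix operator whose $j$-th column is an $N\times 1$ vector-valued operator $D_j$, so that $P_n\cdot\mathcal W$ has $j$-th column equal to $P_n\cdot D_j$. Since $\tilde P_n$ is diagonal, the $j$-th column of $B_n\tilde P_n$ equals $(B_n e_j)\,p_{j,n}$, i.e. the $j$-th column of $B_n$ multiplied by the scalar $p_{j,n}$. Hence the single identity $P_n\cdot\mathcal W = B_n\tilde P_n$ is equivalent to the $N$ identities $P_n\cdot D_j = (B_n)_{\cdot,j}\, p_{j,n}$, which are precisely \eqref{e1} once we set $\Lambda_n(D_j) := (B_n)_{\cdot,j}$. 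Moreover, invertibility of the $N\times N$ matrix $B_n$ is exactly linear independence of its columns $\Lambda_n(D_1),\dots,\Lambda_n(D_N)$, these being $N$ vectors in $\mathbb{C}^N$; so the two conditions on $B_n$ and on the $\Lambda_n(D_j)$ coincide. Both implications of the equivalence then follow by reading this correspondence in each direction: given operators $D_j$ as in \eqref{e1}, I assemble $\mathcal W$ from their columns and $B_n$ from the vectors $\Lambda_n(D_j)$; conversely, given $\mathcal W$ and $B_n$, I read off the columns.

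I do not expect a serious obstacle; the substance is the bookkeeping of the column decomposition together with getting the direction of the Darboux relation right. The one point that genuinely requires care is precisely this orientation: the definition intertwines $P_n$ with $\tilde P_n$ through an operator acting on $P_n$, so in order to land on the \emph{diagonal} target one must first pass through the symmetry Corollary before invoking Theorem \ref{darboux-thm}; with the opposite orientation the operator would act on $\tilde P_n$ and the expansion would split into rows acting on the individual $p_{i,n}$ rather than isolating the scalars $p_{j,n}$ on the right, failing to reproduce \eqref{e1}. A secondary, routine point is to confirm that the operator $\mathcal W$ assembled from the columns $D_1,\dots,D_N$ indeed lies in $\operatorname{Mat}_{N}(\Omega[x])$, which is immediate since stacking $N$ vector operators with polynomial coefficients produces a matrix operator with polynomial coefficients of the same degrees.
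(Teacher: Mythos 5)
Your proposal is correct and follows essentially the same route as the paper: reduce to Theorem \ref{darboux-thm} (via the symmetry corollary to fix the orientation $P_n\cdot\mathcal V=A_n\tilde P_n$), then identify the columns of $\mathcal V$ with the operators $D_j$ and the columns of $A_n$ with the vectors $\Lambda_n(D_j)$, using that $\tilde P_n$ is diagonal so the $j$-th column of $A_n\tilde P_n$ is the $j$-th column of $A_n$ times the scalar $p_{j,n}$. If anything, you are slightly more explicit than the paper about the need to invoke the symmetry corollary before applying Theorem \ref{darboux-thm}; the paper performs this step tacitly.
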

\begin{remark}\label{DinD(w,W)}
    Let us observe that the differential operator $D_j$ in \eqref{e1} belongs to the module $\mathcal D(W, w_j)$.
\end{remark}
\begin{proof}
By Theorem \ref{darboux-thm}, $P_n$  is a Darboux transformation of $\tilde P_{n}:=p_{1,n} \oplus \cdots \oplus p_{N,n}$ if and only if there is a linear differential operator $\mathcal V$
such that
$$ P_{n}(x) \cdot \mathcal{V}  = A_{n}\tilde {P}_{n}(x)$$
for $A_{n} \in \operatorname{Mat}_{N}(\mathbb{C})$ invertible for all but finitely many $n$. If, for $j=1,\dots,N$, we define the operator $D_j$ as the $j$-th column of the operator $\mathcal{V}$, we have that the equation above is equivalent to have
$$
P_n \cdot D_j=\Lambda_{n}(D_{j})\,\,p_{j,n}, \quad \text{ for } j=1,\dots,N,
$$
where $\Lambda_{n}(D_{j})$ is the $j$-th column of $A_n$. Clearly $\{\Lambda_{n}(D_{1}),\dots,\Lambda_{n}(D_{N})\}$ is linearly independent if and only if $A_n$ is invertible. The theorem is proved.
\end{proof}

Naturally, we also have a dual version of the result above. The proof is completely analogous to the one of Theorem \ref{T1}.

\begin{thm}\label{T2} Let  $\{P_n\}_{n\ge0}$ be a $N\times N$ sequence of matrix orthogonal polynomials 
and let $\{p_{j,n}\}_{n\ge0}$ be a scalar-valued sequence of orthogonal polynomials 
 for $j=1,\dots,N$.

 $P_n$  is a Darboux transformation of $p_{1,n} \oplus \cdots \oplus p_{N,n}$ if and only if for $j = 1 , \ldots N$ there exist a $1\times N$ vector-valued linear differential operator $D_{j}$ such that 
\begin{equation}\label{e2}
\Lambda_{n}(D_{j})\,\,P_n =p_{j,n}\cdot D_j,
\end{equation}
with  the $1\times N$ vectors $\{\Lambda_{n}(D_{1}),\dots,\Lambda_{n}(D_{N})\}$ are  linearly independent for all but finitely many $n$.
\end{thm}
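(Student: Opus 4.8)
The plan is to mirror the argument for Theorem \ref{T1}, but now exploiting the symmetry of the Darboux relation (the Corollary following Theorem \ref{darboux-thm}), which lets me trade a right factor $\mathcal{V}$ read off by columns for its partner operator $\mathcal{N}$ read off by rows. Concretely, set $\tilde{P}_n := p_{1,n}\oplus\cdots\oplus p_{N,n}$. By that Corollary, $P_n$ is a Darboux transformation of $\tilde{P}_n$ if and only if $\tilde{P}_n$ is a Darboux transformation of $P_n$, and by Theorem \ref{darboux-thm} the latter holds precisely when there is a differential operator $\mathcal{N}\in\operatorname{Mat}_N(\Omega[x])$ with $\tilde{P}_n\cdot\mathcal{N}=\tilde{A}_n P_n$ for matrices $\tilde{A}_n$ invertible for all but finitely many $n$ (this $\mathcal{N}$ is exactly the companion operator produced by Proposition \ref{dr}). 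Thus the task reduces to showing that the existence of such an $\mathcal{N}$ is equivalent to the existence of the $N$ row-operators $D_j$ satisfying \eqref{e2} together with the asserted independence.

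For the forward direction I would decompose $\mathcal{N}$ by its rows: writing $\mathcal{N}=\sum_k \partial^k G_k(x)$, let $D_j$ be the $1\times N$ operator whose coefficients are the $j$-th rows of the $G_k$. The key observation—and the only place where the hypothesis that $\tilde{P}_n$ is a direct sum of scalars enters—is the row-decoupling identity $(\tilde{P}_n\cdot\mathcal{N})_{j,:}=\sum_k p_{j,n}^{(k)}\,(G_k)_{j,:}=p_{j,n}\cdot D_j$, which holds because the diagonal form of $\tilde{P}_n$ makes the operator act on the $j$-th row through the single scalar entry $p_{j,n}$ and leaves the rows uncoupled. Comparing the $j$-th row of $\tilde{P}_n\cdot\mathcal{N}=\tilde{A}_n P_n$ then yields exactly $p_{j,n}\cdot D_j=\Lambda_n(D_j)P_n$, with $\Lambda_n(D_j)$ equal to the $j$-th row of $\tilde{A}_n$, which is \eqref{e2}. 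The converse reverses this bookkeeping: I assemble the given $D_j$ as the rows of a single operator $\mathcal{N}$ and the vectors $\Lambda_n(D_j)$ as the rows of $\tilde{A}_n$, so that \eqref{e2} holding for every $j$ reassembles into $\tilde{P}_n\cdot\mathcal{N}=\tilde{A}_n P_n$.

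Finally, the invertibility clause matches the independence clause verbatim: $\tilde{A}_n$ is invertible if and only if its rows $\{\Lambda_n(D_1),\dots,\Lambda_n(D_N)\}$ are linearly independent, so ``invertible for all but finitely many $n$'' translates directly into the stated genericity of the independence of the $1\times N$ vectors. The only step demanding genuine care is the row-decoupling identity, which rests squarely on $\tilde{P}_n$ being diagonal with scalar entries; everything else is the transpose-flavored analogue of the column computation used in Theorem \ref{T1}, so I expect no essential obstacle beyond keeping the row/column conventions straight.
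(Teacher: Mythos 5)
Your proposal is correct and follows essentially the route the paper intends: the paper states that the proof of Theorem \ref{T2} is ``completely analogous'' to that of Theorem \ref{T1}, and your argument is exactly that dual version — invoking the symmetry of the Darboux relation (via the Corollary after Theorem \ref{darboux-thm} / Proposition \ref{dr}) to obtain the companion operator $\mathcal{N}$ with $\tilde P_n\cdot\mathcal{N}=\tilde A_n P_n$, then reading off the $D_j$ as its rows, with the row-decoupling justified by the diagonal form of $\tilde P_n$ and invertibility of $\tilde A_n$ matching linear independence of its rows.
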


\

\subsection{Example 1. Darboux transformation of a direct sum of classical scalar weights}\label{E1}
\mbox{}

Here we consider an example of matrix-valued orthogonal polynomials arising from matrix-valued spherical functions associated with sphere three-dimensional studied in \cite{PTZ14}. We apply our Theorem \ref{T2} to prove that it is a Darboux transformation of a diagonal of scalar classic orthogonal polynomials and  to build the full Darboux transformation explicitly.

Let $P_{n}(x)$ be the sequence of monic orthogonal polynomials with respect to 
\begin{equation*}
    W(x)= (1-x^2)^\frac 12\begin{pmatrix}
        3 & 3x& 4x^2-1\\ 3x& x^2+2 &3x \\4x^2-1 & 3x & 3
    \end{pmatrix},\quad x\in(-1,1).
\end{equation*}
We want to prove that $P_{n}(x)$ is a Darboux transformation of a direct sum of scalar-valued sequences of orthogonal polynomials. 
Namely, we want to find sequences of scalar polynomials $p_{j,n}$, orthogonal with respect to a weight $w(x)$, for which there is a $1\times 3$ second-order differential operator $D=\begin{pmatrix}d_1 & d_2 & d_3\end{pmatrix}$ satisfying \eqref{e2}, i.e., we need $D\in\mathcal D(w, W)$.

Let us consider a generic scalar Jacobi weight $w(x) = (1-x)^{\alpha}(1+x)^{\beta}$. By using Theorem \ref{op mod}, after some computations (solving the differential equations  \eqref{eq_sym} and the conditions \eqref{cond_sym}), we find that $\mathcal D(w, W)$ contains order one operators for $\alpha = \beta = \frac{3}{2}$. More precisely, these operators are linear combinations of 
\begin{equation*}
        {D}_{1} = \begin{pmatrix} \partial x  & -\partial 2  & \partial x + 4     \end{pmatrix},
 \quad 
{D}_{2} = \begin{pmatrix} -\partial - 1 & \partial 2x + 2  & -\partial  + 1 \end{pmatrix}  
\quad
\text{ and } 
\quad
    {D}_{3}  = \begin{pmatrix}  -1 & 0 & 1   \end{pmatrix}.
\end{equation*}

If $p_{n}(x)$ are the monic Jacobi polynomials associated to the weight $w(x) = (1-x^{2})^{\frac{3}{2}}$ then 
\begin{equation*}
    \begin{split}
        p_{n}(x) \cdot D_{1} & = \begin{pmatrix} n & 0 & n + 4 \end{pmatrix} P_{n}(x), \\
        p_{n}(x) \cdot D_{2} & = \begin{pmatrix} -1 & 2n+2 & 1 \end{pmatrix} P_{n}(x), 
        %\text{ and} 
        \\
        p_{n}(x) \cdot D_{3} & = \begin{pmatrix} -1 & 0 & 1 \end{pmatrix} P_{n}(x).
    \end{split}
\end{equation*}

Therefore, the set $\{ \Lambda_{n}(D_{1}), \Lambda_{n}(D_{2}), \Lambda_{n}(D_{3}) \}$ is linearly independent for all $n \in \mathbb{N}_{0}$. By Theorem \ref{T2} it follows that $P_n$ is a Darboux transformation of the direct sum of scalar Jacobi classical polynomials 
$\begin{psmallmatrix}p_n&0&0\\0&p_n&0\\0&0&p_n\end{psmallmatrix}$. Explicitely, we have that 
$ \mathcal V=\partial\begin{psmallmatrix}
    x&-2&x\\ -1&2x&-1\\0&0&0
\end{psmallmatrix} + 
\begin{psmallmatrix}
    0&0&4\\-1&2&1 \\ -1&0&1
\end{psmallmatrix}$ satisfies 
$$\begin{psmallmatrix}p_n&0&0\\0&p_n&0\\0&0&p_n\end{psmallmatrix}\cdot \mathcal V= A_nP_n, \qquad \text{ 
with } \quad 
A_n= \begin{psmallmatrix} n & 0 & n + 4 \\-1 & 2n+2 & 1\\-1 & 0 & 1   \end{psmallmatrix}.$$

\begin{remark}
    Notice that without having an 
    explicit expression for the orthogonal polynomials $P_{n}$ with respect to $W$, we were able to demonstrate, through the study of the modules $D(w, W)$, that $P_n$ is a Darboux transformation of a direct sum of Jacobi polynomials. 
   Even more,  
   we obtain an explicit expression for the orthogonal polynomials $P_{n}$ in terms of the polynomials $p_{n}$ by using the Darboux transformation. Explicitly, a sequence of orthogonal polynomials for $W$ is given by 
    $$p_{n}(x) \cdot \begin{pmatrix}D_{1} \\ D_{2} \\ D_{3} \end{pmatrix} = \begin{pmatrix} xp_{n}(x)' & -2p_{n}(x)' & x p_{n}(x)' + 4p_{n}(x) \\ -p_{n}(x)' - p_{n}(x) & 2x p_{n}(x)' + 2p_{n}(x) & -p_{n}(x)' + p_{n}(x) \\ -p_{n}(x) & 0 & p_{n}(x)\end{pmatrix}.$$
\end{remark}

\

\subsection{Example 2. A case that is not a Darboux transformation of scalar classical polynomials}\label{ex 2}
Let $a, \, b \in \mathbb{R} - \{0\}$. We consider the weight 
$$W(x) = e^{-x^{2}}\begin{pmatrix} a^{2}x^{2} + e^{2bx} & ax \\ ax & 1 \end{pmatrix}.$$
In \cite{BP23-1}, the authors provided an explicit expression of a sequence of orthogonal polynomials for $W$,
$$Q_{n}(x) = \begin{pmatrix}H_{n}(x-b) && aH_{n+1}(x)-axH_{n}(x-b) \\ -anH_{n-1}(x-b) && 2e^{b^{2}}H_{n}(x)+a^{2}nxH_{n-1}(x-b)\end{pmatrix},$$
where $H_{n}(x)$ is the sequence of monic orthogonal polynomials for the scalar Hermite weight $e^{-x^{2}}$, and they proved that the algebra $\mathcal{D}(W)$ is a polynomial algebra over the second-order differential operator
\begin{equation*}\label{op Dd}
D = \partial^{2}I + \partial \begin{pmatrix} -2x + 2b && -2abx + 2a \\
0 && -2x\end{pmatrix} + \begin{pmatrix} -2 && 0 \\ 0 && 0 \end{pmatrix}.
\end{equation*}

As a consequence, $Q_n$ is not a Darboux transformation of scalar orthogonal polynomials.
Here, in a more direct manner, we prove a stronger statement: For any scalar weight $w$ supported on $\mathbb{R}$, the modules 
$\mathcal{D}(W,w)$ and   $\mathcal{D}(w, W)$ are zero (see  Theorem \ref{T1} and Remark \ref{DinD(w,W)}). 

\begin{prop}
    The modules $\mathcal{D}(w, W)$ and $\mathcal{D}( W,w)$ are zero for any scalar weight $w$  on $\mathbb{R}$.
\end{prop}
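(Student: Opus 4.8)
The plan is to reduce everything to the two structural facts about this weight already recorded in \cite{BP23-1}: that $\mathcal{D}(W)=\mathbb{C}[D]$ is the \emph{polynomial algebra} generated by the single second-order operator $D$, and the explicit shape of its eigenvalues. I would first record that one-line symbol computation. Writing $D=\partial^{2}I+\partial F_{1}+F_{0}$ and using that the eigenvalue of a second-order operator on the monic polynomial of degree $n$ is $\,n(n-1)(F_2)_2+n(F_1)_1+(F_0)_0\,$ (coefficient of $x^{j}$ in $F_{j}$), one gets, up to conjugation by the leading coefficients of $Q_n$, the eigenvalue matrix $\Lambda_{n}(D)=\left(\begin{smallmatrix}-2n-2 & -2abn\\ 0 & -2n\end{smallmatrix}\right)$, whose two eigenvalues $-2n-2$ and $-2n$ are distinct for every $n$. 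This is the only place where the hypotheses $a,b\neq 0$ and the precise entries of $W$ enter.

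I would treat $\mathcal{D}(W,w)$ first and then deduce the vanishing of $\mathcal{D}(w,W)$ from it. So suppose $\mathcal{T}\in\mathcal{D}(W,w)$ and write $P_{n}\cdot\mathcal{T}=A_{n}p_{n}$ with $A_{n}\in\mathbb{C}^{2\times 1}$, where $p_{n}$ are the monic orthogonal polynomials of $w$. By Proposition \ref{basicprop}(iii) its $W$-adjoint $\mathcal{S}:=\mathcal{T}^{\dagger}=w\,\mathcal{T}^{\ast}W^{-1}$ lies in $\mathcal{D}(w,W)$, say $p_{n}\cdot\mathcal{S}=B_{n}P_{n}$; moreover, repeating verbatim the integration-by-parts computation from the proof of Proposition \ref{dr} gives the eigenvalue relation $B_{n}=\|p_{n}\|_{w}^{2}\,A_{n}^{\ast}\,\|P_{n}\|_{W}^{-2}$. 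I keep this relation precisely because of its positivity: $\|p_{n}\|_{w}^{2}>0$ and $\|P_{n}\|_{W}^{-2}$ is positive definite.

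Next I would compose. By \eqref{D(W,w)D(w,W)} the operator $\mathcal{T}\mathcal{S}$ lies in $\mathcal{D}(W)$, and $P_{n}\cdot\mathcal{T}\mathcal{S}=A_{n}B_{n}P_{n}$, so its eigenvalue is the outer product $A_{n}B_{n}$, a matrix of rank at most one. Since $\mathcal{D}(W)=\mathbb{C}[D]$, we may write $\mathcal{T}\mathcal{S}=q(D)$ for some scalar polynomial $q$, whence $A_{n}B_{n}=q(\Lambda_{n}(D))$. By the spectral mapping theorem, $\det q(\Lambda_{n}(D))=q(-2n-2)\,q(-2n)$, and having rank at most one forces this determinant to vanish for every $n$. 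As a nonzero polynomial has only finitely many roots while $\{-2n,-2n-2:n\in\mathbb{N}_{0}\}$ is infinite, we conclude $q\equiv 0$, i.e. $A_{n}B_{n}=0$. Taking traces, $0=\operatorname{tr}(A_{n}B_{n})=B_{n}A_{n}=\|p_{n}\|_{w}^{2}\,A_{n}^{\ast}\|P_{n}\|_{W}^{-2}A_{n}$, and positive-definiteness forces $A_{n}=0$ for all $n$. Then $P\cdot\mathcal{T}=0$ for every matrix polynomial $P$, which, expanded on the monomial basis $x^{k}I$, yields $\mathcal{T}=0$. Hence $\mathcal{D}(W,w)=0$. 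For the other module, given $\mathcal{S}\in\mathcal{D}(w,W)$ Proposition \ref{basicprop}(iii) gives $\mathcal{S}^{\dagger}\in\mathcal{D}(W,w)=0$, and since $\dagger$ is an involution (using that $W$ and $w$ are Hermitian), $\mathcal{S}=(\mathcal{S}^{\dagger})^{\dagger}=0$.

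I expect the only genuine content — and the main obstacle — to be the two inputs imported from \cite{BP23-1}, namely that $\mathcal{D}(W)$ is singly generated and the explicit eigenvalue matrix $\Lambda_{n}(D)$; everything after that is the rank-one-versus-distinct-eigenvalue dichotomy together with the positivity of the norm relation, which is routine. One could in principle avoid invoking $\mathcal{D}(W)=\mathbb{C}[D]$ by bounding the order of a hypothetical nonzero $\mathcal{T}$ and applying Theorem \ref{op mod} to the symmetry equations \eqref{eq_sym}–\eqref{cond_sym} directly, but controlling the order without the algebra structure looks harder, so I would keep the polynomial-algebra route.
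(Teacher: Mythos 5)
Your argument is correct, but it takes a genuinely different route from the paper's. The paper works at the level of coefficients: for $D_{1}=\begin{pmatrix}d_{1}&d_{2}\end{pmatrix}\in\mathcal D(w,W)$ it forms the adjoint $D_{2}=WD_{1}^{*}w^{-1}$, whose entries involve both $e^{-x^{2}}w(x)^{-1}$ and $e^{-x^{2}+2bx}w(x)^{-1}$; since at most one of these can be rational, polynomiality of the coefficients of $D_{2}$ forces $d_{1}=0$ (or the analogous vanishing in the other case), and then the product $D_{2}D_{1}\in\mathcal D(W)$ has a zero column, which a direct check against the explicit $Q_{n}$ shows is only possible for the zero operator. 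You instead work at the level of eigenvalues: you import the classification $\mathcal D(W)=\mathbb C[D]$ from \cite{BP23-1}, compute $\Lambda_{n}(D)=\left(\begin{smallmatrix}-2n-2&-2abn\\0&-2n\end{smallmatrix}\right)$, write $\mathcal T\mathcal T^{\dagger}=q(D)$ and use that its eigenvalue $A_{n}B_{n}$ has rank at most one, so $\det q(\Lambda_{n}(D))=q(-2n-2)q(-2n)=0$ for all $n$ forces $q\equiv0$; the positivity of $B_{n}A_{n}=\|p_{n}\|_{w}^{2}A_{n}^{*}\|P_{n}\|_{W}^{-2}A_{n}$ then gives $A_{n}=0$ and hence $\mathcal T=0$. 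Both arguments are sound (your auxiliary steps check out: the adjoint eigenvalue relation from Proposition \ref{dr} does not need invertibility of $A_{n}$, $\Lambda_{n}$ is multiplicative so $\Lambda_{n}(q(D))=q(\Lambda_{n}(D))$, and $P_{n}\cdot\mathcal T=0$ for all $n$ does imply $\mathcal T=0$). The trade-off: your route is more conceptual and more general --- it shows $\mathcal D(W,w)=0$ for every scalar $w$ whenever $\mathcal D(W)$ is a polynomial algebra on a single generator whose eigenvalues eventually avoid any finite set, and it replaces the paper's unproved ``straightforward computation'' about operators with a zero first column by a determinant-and-positivity argument --- but it leans on the full strength of the structure theorem for $\mathcal D(W)$, which the paper deliberately avoids (it advertises its proof as ``more direct'' precisely because it only uses the transcendence of $e^{2bx}$ and the explicit polynomials). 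The hypotheses $a,b\neq0$ enter your proof only through the cited classification, whereas in the paper's proof $b\neq0$ is used explicitly to separate the two exponentials.
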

\begin{proof}
    Let $w(x)$ be a scalar weight supported on $\mathbb{R}$. Let $D_{1} = \begin{pmatrix} d_{1} & d_{2} \end{pmatrix} $ be a differential operator in the module $\mathcal{D}(w,W)$. By Proposition \ref{basicprop}, it follows that $D_{2} = W(x)D_{1}^{\ast}w(x)^{-1} = \begin{pmatrix} e^{-x^{2}}(a^{2}x^{2}d_{1}^{\ast}+ axd_{2}^{\ast})w(x)^{-1} + e^{-x^{2}+2bx}d_{1}^{\ast}w(x)^{-1} \\ e^{-x^{2}}(axd_{1}^{\ast}+ d_{2}^{\ast})w(x)^{-1} \end{pmatrix}$ belongs to $\mathcal{D}(W,w)$.
    Assume that $e^{-x^{2}}w(x)^{-1}$ is a rational function. Therefore, we have that $e^{-x^{2}+2bx}w(x)^{-1}$ is not a rational function. Since $D_{2}$ must have polynomial coefficients, it follows that $d_{1} = 0$. From here, we obtain that $D_{2} = \begin{pmatrix} e^{-x^{2}}axd_{2}^{\ast}w(x)^{-1} \\ e^{-x^{2}}d_{2}^{\ast}w(x)^{-1} \end{pmatrix}$. By \eqref{D(W,w)D(w,W)}, we have that 
    $$D_{2}D_{1} = \begin{pmatrix} 0 & e^{-x^{2}}axd_{2}^{\ast}w(x)^{-1}d_{2} \\ 0 & e^{-x^{2}}d_{2}^{\ast}w(x)^{-1}d_{2} \end{pmatrix} \in \mathcal{D}(W).$$
By straightforward computations, it can be checked that the only operator with its first column zero that has $Q_{n}$ as eigenfunction, for all $n\in \mathbb{N}_{0}$, is the operator zero. Thus, $D_{1} = 0$.

\noindent
If $e^{-x^{2}}w(x)^{-1}$ is not a rational function, then $D_{2} = \begin{pmatrix} e^{-x{2}+2bx}d_{1}^{\ast}w(x)^{-1} \\ 0 \end{pmatrix}$ and we proceed in an analogous way. Hence, $\mathcal{D}(w,W)$ is zero.

    Finally, from Proposition \ref{basicprop} we also obtain that $\mathcal{D}(W, w)=0$.
\end{proof}

\section{Darboux transformation of matrix-valued polynomials}\label{S-DT}

In this final section, we go beyond Darboux reducibility to scalar polynomials. 
 One of the motivations for this is that a sequence of polynomials that is not a Darboux transformation of classical polynomials could still be Darboux-reducible. For a better understanding of this, we will need the following theorems, which are non-scalar versions of Theorems \ref{T1} and \ref{T2}.

\begin{thm}\label{T3} Let  $\{P_n\}_{n\ge0}$ be a $N\times N$ sequence of matrix-valued orthogonal polynomials 
and let $\{p_{j,n}\}_{n\ge0}$ be a $d_j\times d_j$ sequence of matrix-valued orthogonal polynomials 
 for $j=1,\dots,M$.

 $P_n$  is a Darboux transformation of $ p_{1,n} \oplus \cdots \oplus p_{M,n} $ if and only if, for each $j = 1 , \ldots M$, there exists a $ N \times d_j$ matrix-valued linear differential operator $D_{j}$ satisfying that 
$$
\Lambda_{n}(D_{j})\,\,p_{j,n} =P_n \cdot D_j\quad \text{ for all } n\in \mathbb{N}_{0},
$$
for some $ N \times d_j$ matrix $\Lambda_n(D_j)$, and the $N\times N$ matrix given by
$$
\begin{pmatrix}\|&\|&&\|\\  
\Lambda_{n}(D_{1})&\Lambda_{n}(D_{2}) &\dots&\Lambda_{n}(D_{M})\\
\|&\|&&\| 
   \end{pmatrix}$$
is invertible for all but finitely many $n$.
\end{thm}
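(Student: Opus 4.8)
The plan is to mirror the proof of Theorem \ref{T1}, reducing the general statement to Theorem \ref{darboux-thm} and Proposition \ref{dr} via a block-column decomposition of the intertwining operator $\mathcal V$. The essential structural observation is that the target weight is the direct sum $\tilde W = \tilde W_1 \oplus \cdots \oplus \tilde W_M$, where $\tilde W_j$ is a weight of size $d_j$ for which $p_{j,n}$ is orthogonal, and the polynomials are $\tilde P_n = p_{1,n}\oplus\cdots\oplus p_{M,n}$. Since $\sum_j d_j = N$, any $N\times N$ operator $\mathcal V \in \operatorname{Mat}_N(\Omega[x])$ can be written in block form with $M$ column-blocks, the $j$-th block being an $N\times d_j$ operator. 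I would define $D_j$ to be precisely this $j$-th block-column of $\mathcal V$, which lives in $\mathcal D(W,\tilde W_j)$.

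For the forward direction, I would assume $P_n$ is a Darboux transformation of $\tilde P_n$. By Theorem \ref{darboux-thm} there is $\mathcal V \in \operatorname{Mat}_N(\Omega[x])$ with $P_n\cdot \mathcal V = A_n \tilde P_n$ and $A_n$ invertible for all but finitely many $n$. Splitting both sides into column-blocks conformally with the decomposition $\tilde P_n = p_{1,n}\oplus\cdots\oplus p_{M,n}$, the $j$-th block of the identity reads $P_n \cdot D_j = \Lambda_n(D_j)\, p_{j,n}$, where $D_j$ is the $j$-th block-column of $\mathcal V$ and $\Lambda_n(D_j)$ is the $j$-th block-column of $A_n$ (the block multiplication is clean because $\tilde P_n$ is block-diagonal, so only the $j$-th diagonal block $p_{j,n}$ survives in the $j$-th column). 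Assembling the $\Lambda_n(D_j)$ side by side recovers exactly the matrix $A_n$, so its invertibility is equivalent to the invertibility of the stated block matrix. For the converse I would run this in reverse: given the operators $D_j$ with the invertibility hypothesis on the assembled matrix, I define $\mathcal V$ by juxtaposing the $D_j$ as block-columns and set $A_n$ to be the assembled eigenvalue matrix; then $P_n \cdot \mathcal V = A_n \tilde P_n$ with $A_n$ invertible for all but finitely many $n$, and Theorem \ref{darboux-thm} delivers the Darboux transformation.

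The one point requiring genuine care, rather than being purely formal, is checking that each $D_j$ individually belongs to $\mathcal D(W,\tilde W_j)$ and that the equation $P_n\cdot D_j = \Lambda_n(D_j) p_{j,n}$ is the correct block-identity. This is where I expect the main (mild) obstacle to lie: one must verify that the Darboux data for the full direct sum decouples cleanly across the blocks of $\tilde W$, i.e.\ that $\mathcal D(W,\tilde W) = \bigl(\mathcal D(W,\tilde W_1)\ \cdots\ \mathcal D(W,\tilde W_M)\bigr)$ as a row of modules. This decoupling follows from the block structure of $\mathcal D(W_1\oplus\cdots\oplus W_r)$ already recorded in the excerpt (the display giving $\mathcal D(\hat W)$ as a matrix of modules $\mathcal D(W_i,W_j)$), applied to the enlarged weight $W\oplus \tilde W_1\oplus\cdots\oplus\tilde W_M$ and Proposition \ref{basicprop}. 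Since the identity $P_n \cdot \mathcal V = A_n\tilde P_n$ holds at the level of each column independently, no cross-block interaction occurs, and the argument goes through verbatim as in Theorem \ref{T1} with scalars $p_{j,n}$ replaced by the matrix blocks $p_{j,n}$ and columns $\Lambda_n(D_j)$ replaced by column-blocks. The invertibility criterion transfers directly because a block-column matrix is invertible precisely when the full assembled matrix is.
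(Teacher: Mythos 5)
Your proposal is correct and follows essentially the same route as the paper: both invoke Theorem \ref{darboux-thm} and then identify $D_j$ with the $j$-th block-column of $\mathcal V$ (columns $d_1+\dots+d_{j-1}+1$ through $d_1+\dots+d_j$), using the block-diagonality of $\tilde P_n$ to decouple the identity $P_n\cdot\mathcal V=A_n\tilde P_n$ into the $M$ block equations. The extra verification you flag (that each $D_j$ lies in $\mathcal D(W,\tilde W_j)$) is immediate from Definition \ref{modulos Ww} once the block equation is established, so no further argument is needed.
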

\begin{proof}
This proof is almost the same as the one of Theorem \ref{T1}, we add it just for completeness.
By Theorem \ref{darboux-thm}, $P_n$  is a Darboux transformation of $\tilde P_{n}:=p_{1,n} \oplus \cdots \oplus p_{M,n}$ if and only if there is a linear differential operator $\mathcal V$
such that
$$ P_{n}(x) \cdot \mathcal{V}  = A_{n}\tilde {P}_{n}(x)$$
for $A_{n} \in \operatorname{Mat}_{N}(\mathbb{C})$ invertible for all but finitely many $n$. For $j=1,\dots,M$, we define the operator $D_j$ as the $N\times d_j$ operator consisting of the columns of the operator $\mathcal{V}$ from the $(d_1+\dots+d_{j-1}+1)$-th column to the $(d_1+\dots+d_{j})$-th column. Then we have that the equation above is equivalent to have
$$
P_n \cdot D_j=\Lambda_{n}(D_{j})\,\,p_{j,n}, \quad \text{ for } j=1,\dots,M,
$$
where $\Lambda_{n}(D_{j})$ is a $N\times d_j$ matrix consisting of the columns of $A_n$ from the $(d_1+\dots+d_{j-1}+1)$-th column to the $(d_1+\dots+d_{j})$-th column.  
The theorem is proved.
\end{proof}

The following theorem is a dual version of Theorem \ref{T3} and its proof is completely analogous.
\begin{thm}\label{T3 dual} Let  $\{P_n\}_{n\ge0}$ be a $N\times N$ sequence of matrix-valued orthogonal polynomials
and let $\{p_{j,n}\}_{n\ge0}$ be a $d_j\times d_j$ sequence of matrix-valued orthogonal polynomials 
 for $j=1,\dots,M$.

 $P_n$  is a Darboux transformation of $ p_{1,n} \oplus \cdots \oplus p_{M,n} $ if and only if, for each $j = 1 , \ldots M$, there exists a $d_j\times N$ matrix-valued linear differential operator $D_{j}$ satisfying that
$$
\Lambda_{n}(D_{j})\,\,P_n =p_{j,n}\cdot D_j
$$
for some $d_j\times N$ matrix  $\Lambda_n(D_j)$, and the  $N\times N$ matrix given by
and the $N\times N$ matrix given by
$$
\begin{pmatrix} 
=\joinrel=\joinrel=
 \Lambda_{n}(D_{1}) 
=\joinrel=\joinrel= \\  
=\joinrel=\joinrel=\Lambda_{n}(D_{2}) 
=\joinrel=\joinrel= \\ \vdots \\  
=\joinrel=\joinrel=\Lambda_{n}(D_{M}) 
=\joinrel=\joinrel=  \end{pmatrix}$$
is invertible for all but finitely many $n$.
\end{thm}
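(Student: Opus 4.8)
The plan is to mirror the proof of Theorem \ref{T3} almost verbatim, exchanging the roles of $P_n$ and $\tilde P_n := p_{1,n}\oplus\cdots\oplus p_{M,n}$ and reading \emph{horizontal} strips of an operator where that proof read vertical ones. All the analytic content sits in the already-established equivalence of Theorem \ref{darboux-thm} (equivalently Proposition \ref{dr}); what remains is block-structure bookkeeping. The first step is to apply Theorem \ref{darboux-thm} with the two weights interchanged: $P_n$ is a Darboux transformation of $\tilde P_n$ if and only if there is a single $N\times N$ differential operator $\mathcal N$ with $\tilde P_n\cdot\mathcal N=\tilde A_n P_n$ for all $n\in\mathbb N_0$, where $\tilde A_n\in\operatorname{Mat}_N(\mathbb C)$ is invertible for all but finitely many $n$. (By the corollary following Theorem \ref{darboux-thm} the relation is symmetric, so the two phrasings ``$P_n$ is a Darboux transformation of $\tilde P_n$'' and its converse may be used interchangeably.)

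Next I would exploit that $\tilde P_n$ is block diagonal with diagonal blocks $p_{j,n}$ of size $d_j\times d_j$, and partition $\mathcal N$ into horizontal strips: for $j=1,\dots,M$ let $D_j$ be the $d_j\times N$ operator formed by the rows of $\mathcal N$ with indices from $d_1+\cdots+d_{j-1}+1$ through $d_1+\cdots+d_j$. Because $\tilde P_n$ is block diagonal, the left product $\tilde P_n\cdot\mathcal N$ splits strip by strip, its $j$-th strip being exactly $p_{j,n}\cdot D_j$. Hence the single identity $\tilde P_n\cdot\mathcal N=\tilde A_n P_n$ decouples into the $M$ equations $p_{j,n}\cdot D_j=\Lambda_n(D_j)\,P_n$, where $\Lambda_n(D_j)$ is the $j$-th horizontal strip (row block) of $\tilde A_n$.

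Finally, stacking $\Lambda_n(D_1),\dots,\Lambda_n(D_M)$ vertically reassembles $\tilde A_n$, so the $N\times N$ matrix displayed in the statement is literally $\tilde A_n$; it is invertible for all but finitely many $n$ precisely when $\tilde A_n$ is. Running the correspondence in reverse—assembling the given $D_j$ into one $\mathcal N$ and the given $\Lambda_n(D_j)$ into $\tilde A_n$—yields the converse implication via Proposition \ref{dr}. I do not expect any genuine obstacle: the only point demanding mild care is checking that the order-at-most-$i$ coefficient constraints and the invertibility condition pass cleanly through the strip decomposition, which they do because all the substantive work was already discharged in Proposition \ref{dr}.
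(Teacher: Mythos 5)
Your proposal is correct and matches the paper's approach: the paper simply states that the proof of Theorem \ref{T3 dual} is ``completely analogous'' to that of Theorem \ref{T3}, and your argument is exactly that analogue — invoke Theorem \ref{darboux-thm} (via the symmetry of the Darboux relation) to obtain a single operator $\mathcal N$ with $\tilde P_n\cdot\mathcal N=\tilde A_n P_n$, then decompose $\mathcal N$ and $\tilde A_n$ into horizontal $d_j\times N$ strips instead of vertical ones. The block-diagonal structure of $\tilde P_n$ makes the strip-by-strip decoupling immediate, as you note.
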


\medskip

\subsection{Example 3. Darboux-reducible to a direct sum of $2\times 2$ orthogonal polynomials.}\label{E3}
In this section, we analyze a sequence of matrix-valued orthogonal polynomials of size $4$ that happen to be Darboux-equivalent to a direct sum of two sequences of matrix-valued orthogonal polynomials of size $2$.
Let $W(x)$ the Laguerre-type weight matrix given by

$$W(x) = e^{-x}x^{\alpha} \begin{pmatrix}x^{2} + x && x && x^{2} && 0 \\ x && 1 && x && 0 \\ x^{2} && x && 2x^{2} + 1 && x \\ 0 && 0 && x && 1 \end{pmatrix}.$$
A sequence of orthogonal polynomials for $W$ is given by 
$$Q_{n}(x) = \begin{psmallmatrix} \ell_{n}^{(\alpha+1)}(x) & -(n+\alpha+1)\ell_{n}^{(\alpha)}(x) & 0 & 0 \\ -n \ell_{n-1}^{(\alpha+1)}(x) & n\ell_{n-1}^{(\alpha+1)}(x)x+\ell_{n}^{(\alpha)}(x)+n(n+\alpha)\ell_{n-1}^{(\alpha)}(x)x & - n (n+\alpha)\ell_{n-1}^{(\alpha)}(x) & n(n+\alpha) \ell_{n-1}^{(\alpha)}(x)x \\ 0 & \ell_{n+1}^{(\alpha)}(x)-\ell_{n}^{(\alpha)}(x)x & \ell_{n}^{(\alpha)}(x) & \ell_{n+1}^{(\alpha)}(x)-\ell_{n}^{(\alpha)}(x)x \\ 0 & n (n+\alpha)\ell_{n-1}^{(\alpha)}(x)x & -n(n+\alpha)\ell_{n-1}^{(\alpha)}(x) & n(n+\alpha)\ell_{n-1}^{(\alpha)}(x)x + \ell_{n}^{(\alpha)}(x) \end{psmallmatrix},$$
where $\ell_{n}^{(\alpha)}$ is the sequence of monic orthogonal polynomials for the scalar Laguerre weight $e^{-x}x^{\alpha}$.

Let $w_{1}(x)$ and $w_{2}(x)$ be the $2 \times 2$ irreducible weight matrices
$$w_{1}(x) = e^{-x}x^{\alpha} \begin{pmatrix} x(1+x) && x \\ x && 1 \end{pmatrix}, \quad w_{2}(x) = e^{-x}x^{\alpha} \begin{pmatrix} 1 + x^{2} && x \\x && 1 \end{pmatrix}.$$
We have that 
$$P_{1,n}(x) = \begin{psmallmatrix} \ell_{n}^{(\alpha+1)} & -(n+\alpha+1)\ell_{n}^{(\alpha)} \\ -n\ell_{n-1}^{(\alpha+1)} & n\ell_{n-1}^{(\alpha+1)}(x)x + \ell_{n}^{(\alpha)}(x) \end{psmallmatrix} \text{ and } P_{2,n}(x) = \begin{psmallmatrix} \ell_{n}^{(\alpha)}(x) & \ell_{n+1}^{(\alpha)}(x)-\ell_{n}^{(\alpha)}(x)x \\ - n(n+\alpha)\ell_{n-1}^{(\alpha)}(x) & n(n+\alpha)\ell_{n-1}^{(\alpha)}(x)x+\ell_{n}^{(\alpha)}(x)\end{psmallmatrix}$$
are sequence of orthogonal polynomials for $w_{1}$ and $w_{2}$, respectively.

From the explicit expression of the orthogonal polynomials, we find that the differential operators 
\begin{align*}
        D_{1} & = \partial^{3} \begin{psmallmatrix} 0 &&  x^{3} &&  -x^{2} && x^{3} \\ 0 && 0 && 0 && 0 \end{psmallmatrix}  + \partial^{2} \begin{psmallmatrix}-x  & 2x^{2}(\alpha+2) &  -(2 \alpha+2)x & 2x^{2}(\alpha+2) \\  0 &  x(x-1) &  -x &  x^{2} \end{psmallmatrix} \\
        & \quad + \partial \begin{psmallmatrix} x-\alpha-2 & x(\alpha^{2}+3 \alpha+1) &  -(\alpha+2)(\alpha+1) &  (\alpha+2)(\alpha+1)x \\  0 &   \alpha x -\alpha+2x-1 & -\alpha-1 & (\alpha+1)x \end{psmallmatrix} + \begin{psmallmatrix}  2 && - \alpha-1 &&  0 && 0 \\ 0 && 1 && 0 && 0\end{psmallmatrix}
    \intertext{and}
        D_{2} & = \partial^{4} \begin{psmallmatrix} 0 &&  0 &&  x^{2} &&  0 \\ 0 && x^{2} && 0 &&  x^{2} \end{psmallmatrix} + \partial^{3} \begin{psmallmatrix} 0 &  4x^{2} &  2x(\alpha+2-x) &  4x^{2} \\ 0 &  2x(+\alpha+2-x) &  0 &  2x(\alpha+2-x) \end{psmallmatrix} \\
        & \quad + \partial^{2} \begin{psmallmatrix} 0 &  x(6\alpha+13-4x) & \alpha (\alpha-3x)+x^{2}+3 \alpha-7x+2 & 2x(3\alpha+6-2x) \\  0 &  \alpha (\alpha-3x)+x^{2}+3 \alpha-5x+2 & 0 & \alpha^{2}-3 \alpha x+x^{2}+3 \alpha-5x+2 \end{psmallmatrix} \\
        & \quad  + \partial \begin{psmallmatrix}   0 &  (2 \alpha+5)(\alpha+1-2x) &  -(\alpha+3)(\alpha+1-x) & (2\alpha+4)(\alpha+1-2x) \\ 0 & -(\alpha+1)(\alpha+1-x) & 0 & -(\alpha+1)(\alpha+1-x)\end{psmallmatrix} \\ 
        & \quad + \begin{psmallmatrix} 0 & -( \alpha+2)(\alpha+1) & \alpha+2 & -(\alpha+1)^{2}\\  0 & 0 &  0 &  1\end{psmallmatrix}
    \end{align*}
satisfy
\begin{equation*}
    \begin{split}
        P_{1,n}(x) \cdot D_{1} & =\Lambda_n(D_1)Q_n(x)= \begin{pmatrix} n +2 & 0 & 0 & 0 \\ 0 & n + 1 & 0 & 0  \end{pmatrix} Q_n(x) ,\\
        P_{2,n}(x) \cdot D_{2} & = \Lambda_n(D_2)Q_n(x)=\begin{pmatrix} 0 & 0 & (n+1)(\alpha + n + 1) + 1 & 0 \\ 0 & 0 & 0  & n(n+\alpha) + 1 \end{pmatrix} Q_n(x).
    \end{split}
\end{equation*}
In other words, $D_{1}$ belongs to $\mathcal{D}(w_{1},W)$  and $D_{2}$ belongs to $\mathcal{D}(w_{2},W)$.
On the other hand, the matrix $$\begin{pmatrix}
=\joinrel=\joinrel=\Lambda_{n}(D_{1}) 
=\joinrel=\joinrel= \\  
=\joinrel=\joinrel=\Lambda_{n}(D_{2})
=\joinrel=\joinrel=\end{pmatrix} = \begin{psmallmatrix} n + 2 & 0 & 0 & 0 \\ 0 & n+1 & 0 & 0 \\ 0 & 0 & (n+1)(\alpha+n+1)+1 & 0 \\ 0 & 0 & 0 & n(n+\alpha)+1 \end{psmallmatrix}$$ is an invertible matrix, for all $n\in \mathbb{N}_{0}$.  Therefore, by Theorem \ref{T3 dual}, we have that $Q_{n}(x)$ is a Darboux transformation of the direct sum $P_{1,n}(x) \oplus P_{2,n}(x)$.

\medskip

\subsection{Example 4. An example of size $3$ that is not Darboux-reducible to scalars and $\mathcal D(W,w)\neq0$} 
Let $W$ be the irreducible $3 \times 3$ Hermite-type weight matrix
\begin{equation*}\label{weight}
W(x) = e^{-x^{2}}\begin{pmatrix} e^{2bx} + a^{2}x^{2} && ax && acx^{2} \\ ax && 1 && cx \\ acx^{2} && cx && c^{2}x^{2} + 1  \end{pmatrix},
\end{equation*}
with $a,b,c \in \mathbb{R}-\{0\}$. A sequence of orthogonal polynomials for $W$ is given by 
$$Q_{n}(x) = \begin{psmallmatrix} H_{n}(x-b) && a\big(H_{n+1}(x)-xH_{n}(x-b)\big) && 0 \\ -\frac{n ae^{-b^{2}}}{2}H_{n-1}(x-b) && \frac{n a^{2}e^{-b^{2}}}{2}\, x  H_{n-1}(x-b) + H_{n}(x) + \frac{nc^{2}}{2}\, xH_{n-1}(x) && - \frac{cn}{2}H_{n-1}(x)\\ 0 && -cnH_{n-1}(x) && 2H_{n}(x)\end{psmallmatrix},$$
where $H_{n}(x)$ is the sequence of monic orthogonal polynomials for the scalar Hermite weight $w(x)=e^{-x^{2}}$.
 
 The differential operators 
\begin{equation*}
\mathcal{D}_{1} = \partial^{2}I + \partial \begin{pmatrix}  2b-2x && -2bax+2a &&  0 \\ 0 && -2x && 0 \\ 0 && 2c && -2x \end{pmatrix} + \begin{pmatrix} 0 && 0 && 0 \\ 0 && 2 && 0 \\ 0 && 0 && 0 \end{pmatrix},
\end{equation*}
and
\begin{equation*}
    \mathcal{D}_{2} = \partial^{2} \begin{pmatrix} 0 && ax && 0 \\ 0 && 1 && 0 \\ 0 && cx && 0 \end{pmatrix} + \partial \begin{pmatrix}  0 && 2a &&  -\frac{2ax}{c} \\ 0 && 0 &&  -\frac{2}{c} \\ 0 &&  2c+\frac{2}{c} && -2x \end{pmatrix} + \begin{pmatrix} 2+\frac{4}{c^{2}} &&  0 && -\frac{2a}{c} \\  0 &&  2+\frac{4}{c^{2}} &&   0 \\  0 && 0 && 0 \end{pmatrix}.
\end{equation*}
are $W$-symmetric operators in the algebra $\mathcal D(W)$. 
Its eigenvalues 
for the monic sequence of orthogonal polynomials for $W$ are given by 
$$\Lambda_{n}(\mathcal{D}_{1}) = \begin{pmatrix} -2n && -2ban && 0 \\ 0 && -2n + 2 && 0 \\ 0 && 0 && -2n\end{pmatrix}, \quad \Lambda_{n}(\mathcal{D}_{2}) = \begin{pmatrix}2 + \frac{4}{c^{2}} && 0 && -\frac{2a}{c}(n+1) \\ 0 && 2+ \frac{4}{c^{2}} && 0 \\ 0 && 0 && -2n \end{pmatrix}.$$
We have computational evidence that the algebra $\mathcal{D}(W)$ is generated by $\{\mathcal{D}_{1}, \mathcal{D}_{2}, I\}$.  

\smallskip
Now, we study the modules $\mathcal{D}(w, W)$ for a scalar weight $w(x)$ supported on $\mathbb{R}$. If $e^{-x^{2}}w(x)^{-1}$ and $e^{-x^{2}+2bx}w(x)^{-1}$ are not rational function, then $\mathcal{D}(w,W)=\{0\}$. In fact, for  $D_{1} = \begin{pmatrix} d_{1} & d_{2} & d_{3} \end{pmatrix} \in \mathcal{D}(w,W)$ and following the proof of Proposition \ref{dr}, we obtain that the differential operator $\tilde D_{1} =W(x)D_{1}^{\ast}w(x)^{-1}$ belongs to the module $\mathcal{D}(W,w)$. We have 
$$\tilde{D}_{1} = \begin{pmatrix}e^{-x^{2}}(a^{2}x^{2}d_{1}^{\ast} + ax^{2}cd_{3}^{\ast} + axd_{2}^{\ast})w(x)^{-1} + e^{-x^{2}+2bx}d_{1}^{\ast}w(x)^{-1} \\ e^{-x^{2}}(axd_{1}^{\ast} + cxd_{3}^{\ast}+ d_{2}^{\ast})w(x)^{-1} \\ e^{-x^{2}}(acx^{2}d_{1}^{\ast} + c^{2}x^{2}d_{3}^{\ast} + cxd_{2}^{\ast} + d_{3}^{\ast})w(x)^{-1} \end{pmatrix}.$$
Since the coefficients of the differential operator $\tilde{D}_{1}$ must be polynomials, we obtain that $\tilde{D}_{1} = 0$. Thus, $D_{1} = 0$.
In conclusion, we only have to considerate the classical scalar Hermite weights $w(x) = e^{-x^{2}}$, or $w(x) =e^{-x^{2}+2bx}$.

Based on strong computational evidence we conjectured that 
\(\mathcal{D}(w, W) = 0\), for 
 the weight \(w(x) = e^{-x^{2}+2bx}\). (Up to order 10, we have not found any differential operator in \(\mathcal{D}(w, W)\)).

\smallskip

For the weight $w(x) = e^{-x^{2}}$, we have that the last row of $Q_{n}(x)$ is
$$\begin{pmatrix} 0 & 0 & 1 \end{pmatrix} Q_{n}(x) = \begin{pmatrix} 0 & -cnH_{n-1}(x) & 2H_{n}(x) \end{pmatrix}=H_{n}(x) \cdot \begin{pmatrix} 0 & -\partial \, c & 2 \end{pmatrix}.$$ 
Hence, the differential operator 
$$D_{1} = \partial \begin{pmatrix} 0 & -c & 0 \end{pmatrix} + \begin{pmatrix} 0 & 0 & 2 \end{pmatrix} \in \mathcal{D}(w,W)$$
and $$\Lambda_n(D_1)= \begin{pmatrix} 0 & 0 & 1 \end{pmatrix} .$$

All operators up to order $10$ obtained in $\mathcal{D}(w, W)$ satisfy that their eigenvalue is linearly dependent on the eigenvalue of $D_{1}$. 

Besides, one can find other differential operators in the module \(D(w, W)\) by multiplying \(D_{1}\) on the right by any operator in the algebra \(D(W)\), or by multiplying it on the left by any operator in the algebra \(\mathcal{D}(w)\). However, in both cases, the eigenvalue of the new operator takes the form \(p(n) \begin{pmatrix} 0 & 0 & 1 \end{pmatrix}\) for a polynomial \(p\) which is linearly dependent on the eigenvalue of \(D_{1}\). 

In any case, we proved that there are not three differential operators up to order $10$ that satisfy the conditions in Theorem \ref{T2}. Therefore, we conjecture that $W$ is not a Darboux transformation of a direct sum of classical weights, but it is Darboux-reducible. Namely, it is Darboux-equivalent to $w\oplus \tilde W$, for some $\tilde W$ weight matrix of size $2$ which is not Darboux-equivalent to a direct sum of scalar weights.

\

\section*{Acknowledgements}
The research of I. Bono Parisi was supported by SeCyT-UNC, CONICET, PIP 1220150100356 and by AUIP and Junta de Andalucía PMA2-2023-056-14. The research of I. Pacharoni was supported by SeCyT-UNC, CONICET, PIP 1220150100356.
The research of I. Zurri\'an was supported by   Ministerio de Ciencia e Innovaci\'on PID2021-124332NB-C21 and Universidad de Sevilla VI PPIT - US.
 
\bibliographystyle{a9}

\begin{bibdiv}
\begin{biblist}

\bib{BP23-2}{misc}{
      author={Bono~Parisi, I.},
      author={Pacharoni, I.},
       title={Darboux transformations and the algebra $\mathcal{D}(w)$},
        date={2023},
        note={arXiv:2311.16325},
}

\bib{BP23-1}{misc}{
      author={Bono~Parisi, I.},
      author={Pacharoni, I.},
       title={Singular examples of the {Matrix Bochner Problem}},
        date={2024},
        note={arXiv:2303.14305},
}

\bib{BP24-1}{misc}{
      author={Bono~Parisi, I.},
      author={Pacharoni, I.},
       title={Singular solutions of the {Matrix Bochner Problem: the
  $N$-dimensional cases}},
        date={2024},
        note={Preprint},
}

\bib{CY22}{article}{
      author={Casper, W.~R.},
      author={Yakimov, M.},
       title={The matrix {Bochner} problem},
        date={2022},
        ISSN={0002-9327},
     journal={Am. J. Math.},
      volume={144},
      number={4},
       pages={1009\ndash 1065},
}

\bib{D97}{article}{
      author={Duran, A.~J.},
       title={Matrix inner product having a matrix symmetric second-order
  differential operator},
        date={1997},
        ISSN={0035-7596},
     journal={Rocky Mt. J. Math.},
      volume={27},
      number={2},
       pages={585\ndash 600},
         url={math.la.asu.edu/~rmmc/rmj/VOL27-2/CONT27-2/CONT27-2.html},
}

\bib{DG04}{article}{
      author={Dur{\'a}n, A.~J.},
      author={Gr{\"u}nbaum, F.~A.},
       title={Orthogonal matrix polynomials satisfying second-order
  differential equations},
        date={2004},
        ISSN={1073-7928},
     journal={Int. Math. Res. Not.},
      volume={2004},
      number={10},
       pages={461\ndash 484},
}

\bib{DG07}{article}{
      author={Dur{\'a}n, A.~J.},
      author={Gr{\"u}nbaum, F.~A.},
       title={Matrix orthogonal polynomials satisfying second-order
  differential equations: coping without help from group representation
  theory},
        date={2007},
        ISSN={0021-9045},
     journal={J. Approx. Theory},
      volume={148},
      number={1},
       pages={35\ndash 48},
}

\bib{GPT01}{article}{
      author={Gr{\"u}nbaum, F.~A.},
      author={Pacharoni, I.},
      author={Tirao, J.},
       title={A matrix-valued solution to {Bochner}'s problem},
        date={2001},
        ISSN={0305-4470},
     journal={J. Phys. A, Math. Gen.},
      volume={34},
      number={48},
       pages={10647\ndash 10656},
}

\bib{GPT02}{article}{
      author={Gr{\"u}nbaum, F.~A.},
      author={Pacharoni, I.},
      author={Tirao, J.},
       title={Matrix valued spherical functions associated to the complex
  projective plane},
        date={2002},
        ISSN={0022-1236},
     journal={J. Funct. Anal.},
      volume={188},
      number={2},
       pages={350\ndash 441},
}

\bib{GPT03a}{article}{
      author={Gr{\"u}nbaum, F.~A.},
      author={Pacharoni, I.},
      author={Tirao, J.},
       title={Matrix valued orthogonal polynomials of the {Jacobi} type},
        date={2003},
        ISSN={0019-3577},
     journal={Indag. Math., New Ser.},
      volume={14},
      number={3-4},
       pages={353\ndash 366},
}

\bib{GT07}{article}{
      author={Gr{\"u}nbaum, F.~A.},
      author={Tirao, J.},
       title={The algebra of differential operators associated to a weight
  matrix},
        date={2007},
        ISSN={0378-620X},
     journal={Integral Equations Oper. Theory},
      volume={58},
      number={4},
       pages={449\ndash 475},
}

\bib{PTZ14}{article}{
      author={Pacharoni, I.},
      author={Tirao, J.},
      author={Zurri{\'a}n, I.},
       title={Spherical functions associated with the three-dimensional
  sphere},
        date={2014},
        ISSN={0373-3114},
     journal={Ann. Mat. Pura Appl. (4)},
      volume={193},
      number={6},
       pages={1727\ndash 1778},
}

\bib{TZ18}{article}{
      author={Tirao, J.},
      author={Zurri{\'a}n, I.},
       title={Reducibility of matrix weights},
        date={2018},
        ISSN={1382-4090},
     journal={Ramanujan J.},
      volume={45},
      number={2},
       pages={349\ndash 374},
         url={hdl.handle.net/11336/58367},
}

\end{biblist}
\end{bibdiv}

\end{document}